\tikzset{join/.code=\tikzset{after node path={%
\ifx\tikzchainprevious\pgfutil@empty\else(\tikzchainprevious)%
edge[every join]#1(\tikzchaincurrent)\fi}}}
\tikzset{>=stealth',every on chain/.append style={join},
         every join/.style={->}}
\newtheorem{thm}{Theorem}[section]
\newtheorem*{mthm}{Main Theorem}
\newtheorem{cor}[thm]{Corollary}
\newtheorem{lem}[thm]{Lemma}
\newtheorem{prop}[thm]{Proposition}
\newtheorem{rem}[thm]{Remark}
\newtheorem{prob}[thm]{Question}
\theoremstyle{definition}
\numberwithin{equation}{section}
\newcommand{\no}{\noindent}
\def\R{\mathbb{R}}
\def\Q{\mathbb{Q}}
\def\Z{\mathbb{Z}}
\def\T{\mathbb{T}}
\def\Cnf{\text{\rm Conf}} 
\begin{document}

% \title[short text for running head]{full title}
\title{Homotopy Brunnian links and the $\kappa$-invariant}

%    Only \author and \address are required; other information is
%    optional.  Remove any unused author tags.

%    author one information
% \author[short version for running head]{name for top of paper}
\author{F. R. Cohen} %{Frederick R. Cohen}

\address{%Department of Mathematics,
University of Rochester,
Rochester, New York 14627 } 
\email{cohf@math.rochester.edu}
%\urladdr{www.math.rochester.edu/\~{}cohf}

\author{R. Komendarczyk} %{Rafal Komendarczyk}
\thanks{The second author acknowledges support of DARPA YFA N66001-11-1-4132 and NSF DMS 1043009.}
\address{%Department of Mathematics,
Tulane University,
New Orleans, Louisiana 70118 } 
\email{rako@tulane.edu}
%\urladdr{www.math.tulane.edu/\~{}rako}

\author{C. Shonkwiler} %{Clayton  Shonkwiler}%$^{*}$}
\address{%Department of Mathematics,
 University of Georgia,
 Athens, Georgia 30602} 
 \email{clayton@math.uga.edu}
%\urladdr{www.math.uga.edu/\~{}clayton} 

%    \subjclass is required.
\subjclass[2010]{Primary: 57M25, 55Q25 Secondary: 57M27}

\date{\today}

\dedicatory{}

%    "Communicated by" -- provide editor's name; required.
\commby{}

%    Abstract is required.
\begin{abstract}
 We provide an alternative proof that Koschorke's $\kappa$-invariant is injective on the set of link homotopy classes of $n$-component homotopy Brunnian links $BLM(n)$. The existing proof (by Koschorke \cite{Koschorke97}) is based on the Pontryagin--Thom theory of framed cobordisms, whereas ours is closer in spirit to techniques based on Habegger and Lin's string links. 
We frame the result in the language of Fox's torus homotopy groups and the rational homotopy Lie algebra $\pi_\ast(\Omega\Cnf(n))\otimes \Q$ of the configuration space. It allows us to express the relevant Milnor's $\mu$--invariants as homotopy periods of $\Cnf(n)$.
\end{abstract}

\maketitle

\addtocounter{footnote}{1}

\vspace{-.5cm}
%%%%%%%%%%%%%%%%%%%%%%%%%%%%%%%%%%%%%%%%%%%%%%%%%%%%%%%
\section{Introduction}\label{sec:intro}
\no The purpose of this paper is to develop methods to address a question of Koschorke \cite[p. 315]{Koschorke97} concerning classical smooth links in $\R^3$. Namely, Koschorke defines a link invariant, called the $\kappa$--invariant, by:
(i) defining a map from the space of {\em link maps} to the space of continuous maps of a torus to a configuration space, and
(ii) passing to the path-components.  He conjectures that this invariant distinguishes links up to link homotopy.
The tools developed in this paper are a computation of a so-called torus homotopy group which provides a framework for studying link homotopy classes and Koschorke's invariant.
The group in question (denoted later by $TF(n)$) is a natural subset of the pointed homotopy classes of maps of a torus to the configuration space. The resulting group is described by generators and relations
as well as associated information. The relations are built up out of relations which appear in a seemingly different context which first arose in work of Kohno and Drinfel'd on a
different subject arising from monodromy representations from the {\em KZ--equation}, (see \cite{Chari-Pressley-book94} for a comprehensive reference). 
In particular, it is shown that  the group obtained here for addressing Koschorke's conjecture is generated by certain elements in a function space. Furthermore, the group generated by
these natural choices satisfies a group theoretic analogue of relations known as the {\em Yang--Baxter relations} or {\em horizontal 4T relations} within knot theory.

\subsection{Preliminaries}\label{sec:prelim}
Let us fix $n$ distinct points $\{x_1,\ldots,x_n\}$ in $\R^3$ and  consider the space of (based) $n$--component link maps, i.e. smooth maps, 
\begin{equation}\label{eq:link-map}
\begin{split}
 L:& (S^1,s_1)\sqcup\ldots \sqcup (S^1,s_n)  \xrightarrow{\qquad} \R^3,\\
 & L(s_i)=x_i,\qquad L_i(S^1)\cap L_j(S^1)= \emptyset,\ i\neq j,
\end{split}
\end{equation}
where each component has a basepoint $s_i$ mapped to a corresponding fixed point $x_i$ in $\R^3$.  
Equivalently, we may consider the space of free link maps where the assumption on the basepoints is dropped, however as there is a bijective correspondence between the pointed and basepoint free theory by a standard argument. Two link maps $L$ and $L'$ are {\em link homotopic}  if and only if there exists a smooth homotopy 
$H:\left(\bigsqcup^n_{i=1} S^1\right)\times I\mapsto \R^3$ connecting $L$ and $L'$ through link maps. Following \cite{Koschorke97}, we denote the set of equivalence classes of $n$--component link maps by $LM(n)$. % We emphasize that our focus is on link maps rather than links, though the two are obviously related since links are embedded link maps. 
 By transversality, a link homotopy (as originally defined by Milnor \cite{Milnor54}) of a link map  can be realized at the level of diagrams by a finite sequence of Reidemeister moves and crossing changes \cite{Habegger-Lin90}; in particular the link homotopy classification of links is equivalent to the link homotopy classification of link maps.

In \cite{Milnor54}, Milnor classified $3$-component links up to link homotopy by the set of invariants built from the pairwise linking numbers $\bar{\mu}(1;2), \bar{\mu}(1;3), \bar{\mu}(2;3)\in \Z$ and a triple linking number $\bar{\mu}(1,2;3)\in \Z_{\gcd\{\bar{\mu}(1;2), \bar{\mu}(1;3), \bar{\mu}(2;3)\}}$. In particular he showed that the collection of integer valued $\mu$--invariants $\{\mu(1, \sigma(2)$, $\ldots$, $\sigma(n-1); n)(L)\}_\sigma$ indexed by permutations $\sigma\in \Sigma(2,\ldots,n-1)$ separates {\em homotopy Brunnian links} (following Milnor's lead, we use $\mu$ rather than $\bar\mu$ to indicate the lack of indeterminacy in the Brunnian case, see the end of Section \ref{S:proof} for detailed definitions). Recall that  an $n$-component link $L$ is \emph{homotopy Brunnian}~\cite{Koschorke97} whenever all of its $(n-1)$-component sublinks are link-homotopically trivial; we denote the subset of these links by $BLM(n)\subset LM(n)$.  It is well known \cite{Levine88, Hughes93} that the $\bar{\mu}$-invariants are insufficient to separate $LM(n)$ for $n \geq 4$, though a refinement of these invariants due to Levine \cite{Levine88} classifies links of four or fewer components. The question of classification has been effectively addressed in 1990 by
Habegger and Lin \cite{Habegger-Lin90} who gave an effective procedure for distinguishing elements in $LM(n)$ for all $n$. Nevertheless it still remains an open problem whether a complete set of ``numerical'' link homotopy invariants for $LM(n)$ can be defined. For instance, the authors of \cite{Mellor-Thurston01, Hughes03, Lin01} address this question using, among other things, the perspective of Vassiliev finite-type invariants. An alternative view, which is rarely cited in this context but which we intend to advocate here, appears in the work on higher dimensional link maps by Koschorke \cite{Koschorke90,Koschorke91b,Koschorke91,Koschorke97,Koschorke03}, Haefliger \cite{Haefliger62}, Massey and Rolfsen \cite{Massey-Rolfsen85}, and more recently by Munson \cite{Munson11} and Munson--Goodwillie in \cite{Goodwillie-Munson10}.

Following \cite{Koschorke91, Koschorke97}, the $\kappa$-invariant is defined for classical links (i.e., 1-dimensional links in $\R^3$) as
\begin{equation}\label{eq:kappa}
	\begin{split}
	\kappa: LM(n) & \to [\T^n, \Cnf(n)],\\
	\kappa([L]) & = [F_L], \qquad F_L = L_1 \times \cdots \times L_n,
	\end{split}
\end{equation}
where $[\T^n, \Cnf(n)]$ is the set of pointed homotopy classes of maps from the $n$-torus $\T^n$ to the configuration space $\Cnf(n)$ of $n$ distinct points in $\R^3$:
\[
	\Cnf(n) = \{(x_1, \ldots , x_n) \in (\R^3)^n\ |\ x_i \neq x_j \text{ for } i \neq j\}.
\]
Again, we consider pointed maps in \eqref{eq:kappa} purely for convenience.  Koschorke \cite{Koschorke91,Koschorke97,Koschorke91b,Koschorke90,Koschorke03} introduced the following central question which is directly related to the problems mentioned above and the $\kappa$--invariant.
%%%%%%%%%%%%%%%%%%%%%%%%%%%%%%%%%%%%%%%%%%%%%%%%%%%%%%%%%%%%%%%%%
\begin{prob}[Koschorke~\cite{Koschorke97}]\label{q:koschorke}
	Is the $\kappa$-invariant injective and therefore a complete invariant of $n$-component classical links up to link homotopy?
\end{prob}
\no In 1997 Koschorke showed that $\kappa$ is injective on $BLM(n)$ \cite[Theorem 6.1 and Corollary 6.2]{Koschorke97}; i.e., it separates homotopy Brunnian links. More recently the above question was answered in the affirmative when $n=3$, \cite{Kom-Milnor08, Kom-Milnor-degree09}. The work of Munson and Voli\'c (e.g. Proposition 3.10 \cite{Munson-Volic09}) gives a possibly stronger invariant of homotopy string links than the Koschorke invariant, an as yet unsettled question. Although their methods apply primarily to links in dimensions greater than $3$, these methods also provide a potentially interesting setting for links in $\R^3$.

%%%%%%%%%%%%%%%%%%%%%%%%%%%%%%%%%%%%%%%%%%%%%%%
\subsection{Statement of the main result}\label{sec:main-results}
In order to frame the main theorem in the language of the rational Lie algebra of $\Omega \Cnf(n)$, the based loops on the configuration space $\Cnf(n)$, we first introduce the necessary background. 
It is well known \cite{Cohen-Lada-May76, Fadell-Husseini01} that  $\pi_\ast(\Omega \Cnf(n)) \otimes \Q$, which  we denote further by $\mathcal{L}(\Cnf(n))$, is a graded Lie algebra with the bracket given by the Samelson product which is the adjoint of the usual Whitehead product \cite{Whitehead78}. More precisely, for any space $X$  the Samelson product of $\alpha \in \pi_k(\Omega X)$ and $\beta \in \pi_j(\Omega X)$ is given by 
\[
 [\alpha, \beta ] = (-1)^{k-1} \partial_\ast [\partial_\ast^{-1} \alpha, \partial_\ast^{-1} \beta]_W,
\]
 where $\partial_\ast: \pi_{p+1}(X) \to \pi_p(\Omega X)$ is the adjoint homomorphism and $[\cdot , \cdot]_W$ is the Whitehead product.
The generators of $\mathcal{L}(\Cnf(n))$ are all in degree $1$ and are represented by maps $B_{j,i}: S^1 \to \Omega\Cnf(n)$, $1 \leq i < j \leq n$, defined as adjoints of pointed versions of the spherical cycles
\begin{equation}\label{eq:A-ij}
		A_{j,i} : S^2 = \Sigma S^1  \longrightarrow \Cnf(n),\qquad
		A_{j,i}(\xi)  = (\ \ldots ,\underbrace{q_j}_{i\text{'th}},  \ldots , \underbrace{q_j+\xi}_{j\text{'th}},\ldots\ ), 
\end{equation}
where $\xi\in S^2$ and $q = (q_1, \ldots , q_n)$ is fixed in $\Cnf(n)$. We also have the following vector space isomorphism \cite[p.~22]{Fadell-Husseini01}:
\begin{equation}\label{eq:L(Conf(n))}
	\mathcal{L}(\Cnf(n)) \stackrel{\text{vect.}}{\cong} \bigoplus_{j=1}^{n-1} \pi_\ast (\Omega(\underbrace{S^2 \vee \ldots \vee S^2}_{j \text{ times}})) \otimes \Q = \bigoplus_{j=2}^{n} \mathcal{L}(B_{j,1}, \ldots, B_{j,j-1}),
\end{equation}
where the $j$th factor $\mathcal{L}(B_{j,1}, \ldots, B_{j,j-1})$ equals $\pi_\ast(\Omega(S^2 \vee \ldots \vee S^2))\, \otimes\, \Q$ and is the free Lie algebra generated by $\{B_{j,k}\}$, $k=1\ldots j-1$.  As a Lie algebra, $\mathcal{L}(\Cnf(n))$ is the quotient of the  direct sum of the free Lie algebras $\mathcal{L}_j$ by the {\em $4T$-relations} \cite[p. 20]{Fadell-Husseini01}, \cite{Kohno02, Cohen-Lada-May76}:
\begin{equation} \label{eq:4T-rel}
\begin{split}
  B_{i,j}  =-B_{j,i}\quad [B_{\sigma(2),\sigma(1)},B_{\sigma(4),\sigma(3)}] & =0,  \qquad (\text{for}\ n\geq 4),\\
 [B_{\sigma(2),\sigma(1)},B_{\sigma(3),\sigma(1)}+B_{\sigma(3),\sigma(2)}] & =  0,
\end{split}
\end{equation} 
where $\sigma$ is any permutation on $\{1,2,\ldots,n\}$. We can now state the main result.
%%%%%%%%%%%%%%%%%%%%%%%%%%%%%%%%%%%%%%%%%%%%%%%%%%%%%%%%%%%%%%%%
\begin{mthm}%
	The restriction of $\kappa$ to $BLM(n)$ is injective.  Moreover,
	\begin{enumerate}
		\renewcommand{\theenumi}{(\roman{enumi})}
		\renewcommand{\labelenumi}{\theenumi}
		\item \label{item:kappa-injective} the image $\kappa(BLM(n))$ of $BLM(n)$ is contained in a copy of $\pi_n(\Cnf(n))\cong \pi_{n-1}(\Omega\Cnf(n))$ inside $[\T^n, \Cnf(n)]$ and it is a free, rank $(n-2)!$, $\Z$--module generated by 
		\begin{equation}\label{eq:B-sig}
			B(n,\sigma)=[B_{n,1}, B_{n, \sigma(2)}, \ldots , B_{n, \sigma(n-1)}], \quad \sigma \in \Sigma(2, \ldots , n-1);
		\end{equation}
		where $[B_{n,1}, B_{n, \sigma(2)}, \ldots , B_{n, \sigma(n-1)}]$ is a shorthand for the iterated Samelson products
		\[
		 [\ldots[[B_{n,1}, B_{n, \sigma(2)}],B_{n, \sigma(3)}], \ldots , B_{n, \sigma(n-1)}].
		\]
	    \item \label{item:kappa-mu} for any representative link $L \in BLM(n)$ we have the following expansion in the above basis:
		\[
			\kappa(L) = \sum_{\sigma \in \Sigma(2,\ldots , n-1)} \mu(1, \sigma(2), \ldots , \sigma(n-1); n)B(n,\sigma),
		\]
		where $\mu(1, \sigma(2), \ldots , \sigma(n-1); n)$ are Milnor's $\mu$--invariants of Brunnian links.
	\end{enumerate}
\end{mthm}
Recall \cite{Sullivan77, Hain84} that the {\em homotopy periods} of a simply connected manifold $M$ (with finite Betti numbers) are integrals in the differential forms on $M$ which  detect all nontrivial elements of $\pi_\ast(M)\otimes\Q$. It is well known that the homotopy periods problem is completely solvable; see \cite{Sullivan77, Hain84} and the recent work \cite{Sinha-Walter08}. As a direct consequence of \ref{item:kappa-mu} we obtain
%%%%%%%%%%%%%%%%%%%%%%%%%%%%%%%%%%%%%%%%%%%
\begin{cor}\label{cor:homotopy-periods}
		Given any $L\in BLM(n)$, the associated $\mu$-invariants $\{\mu(1, \sigma(2)$, $\ldots$, $\sigma(n-1); n)(L)\}_\sigma$, $\sigma\in \Sigma(2,\ldots,n-1)$ are fully determined by the homotopy periods of the basis elements $B(n,\sigma)$.
\end{cor}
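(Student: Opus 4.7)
The plan is to combine the explicit expansion of $\kappa(L)$ from Main Theorem \ref{item:kappa-mu} with Sullivan's theorem on rational homotopy periods applied to the configuration space $\Cnf(n)$. Since $\Cnf(n)$ is simply connected for $n \geq 2$ and has finite Betti numbers, Sullivan's theory \cite{Sullivan77, Hain84} guarantees the existence of closed forms (or Chen iterated integrals) on $\Cnf(n)$ whose pairings with spherical cycles detect all nontrivial rational homotopy classes. I would first invoke Main Theorem \ref{item:kappa-injective} to identify the free rank $(n-2)!$ submodule of $\pi_n(\Cnf(n)) \cong \pi_{n-1}(\Omega\Cnf(n))$ spanned by the iterated Samelson products $B(n,\sigma)$, and then produce a dual family of homotopy periods $\{\omega_\sigma\}_{\sigma \in \Sigma(2,\ldots,n-1)}$ satisfying $\langle \omega_\sigma, B(n,\sigma')\rangle = \delta_{\sigma,\sigma'}$.

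With this dual family in hand, pairing against $\kappa(L)$ and using the expansion in Main Theorem \ref{item:kappa-mu} gives
\[
\langle \omega_\sigma, \kappa(L)\rangle \;=\; \sum_{\sigma'}\mu\bigl(1,\sigma'(2),\ldots,\sigma'(n-1); n\bigr)(L)\,\langle \omega_\sigma, B(n,\sigma')\rangle \;=\; \mu\bigl(1,\sigma(2),\ldots,\sigma(n-1); n\bigr)(L).
\]
Because the forms $\omega_\sigma$ are intrinsic to $\Cnf(n)$ and independent of $L$, this realizes every relevant $\mu$-invariant as a homotopy period evaluated on the $\kappa$-class, which is precisely the content of the corollary. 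Conversely, since the $\{B(n,\sigma)\}$ form a basis of the image of $\kappa$, any homotopy-period functional on the target is a $\Q$-linear combination of the $\omega_\sigma$, so the two pieces of information determine each other.

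The main obstacle, and the only step that does not follow by pure formalism, is producing the $\omega_\sigma$ in a usable form. Sullivan's theorem gives their existence abstractly, but to reconnect with Milnor's combinatorial definition one wants explicit iterated-integral expressions built from the Arnold-Kontsevich generators $\omega_{ij} = (x_i - x_j)^\ast(\mathrm{vol}_{S^2})/(4\pi)$ of $H^\ast(\Cnf(n);\Q)$. Verifying that the resulting Chen integrals pair correctly with the iterated brackets $B(n,\sigma)$ amounts to a combinatorial calculation in the bar complex of the Arnold algebra, carried out for instance along the lines of \cite{Sinha-Walter08}; this is where the substantive work would lie in converting the corollary into explicit integral formulae for the Milnor $\mu$-invariants of homotopy Brunnian links.
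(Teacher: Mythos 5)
Your proposal is correct and follows essentially the same route as the paper, which simply notes that the corollary is a direct consequence of Main Theorem \ref{item:kappa-mu} together with the fact that the homotopy periods of the basis elements $B(n,\sigma)$ are supplied by Sullivan's minimal model theory or Chen's iterated integrals. Your explicit dual-basis pairing $\langle \omega_\sigma, B(n,\sigma')\rangle = \delta_{\sigma,\sigma'}$ just spells out what the paper leaves implicit, and your closing remark about producing explicit integral formulae matches the paper's deferral of that computation to the cited references (e.g.\ the three-component case in \cite{Kom-higher-helicity2}).
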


 The inspiration for the proof of the Main Theorem comes from the algebraic techniques introduced by the first author in \cite{Cohen05} and from the second  proof in \cite[Section~5]{Kom-Milnor08}. Moreover, Corollary~\ref{cor:homotopy-periods} implies that the $\mu$-invariants of homotopy Brunnian links can be computed by Chen's iterated integrals \cite{Hain84}, which in this light appear as generalized Gauss integrals and hence as a possible source of invariants for fluid flows \cite[p.~176]{ArnoldKhesin} (see also \cite{Kom-higher-helicity2}).

%%%%%%%%%%%%%%%%%%%%%%%%%%%%%%%%%%%%%%%%%%%%%%%%%%%%%%%%%%%%%%%%%%%%
{\subsection*{Acknowledgments}
{\small We would like to thank the University of Rochester for hosting our visit in July, 2010. The second and third authors are grateful to Dennis DeTurck, Herman Gluck, Paul Melvin, Jim Stasheff, and David Shea Vela-Vick for inspirational weekly meetings during 2006--2009. 
The first and second author are grateful for the hospitality of {\em The Kavli Institute for Theoretical Physics}, supported in part by the NSF PHY11-25915, and the organizers of the program {\em Knotted Fields, 2012}. }

%%%%%%%%%%%%%%%%%%%%%%%%%%%%%%%%%%%%%%%%%%%%%%%%%%%%%%%%%%%%%%%%%%%%
\section{String links}\label{S:string-links}

Following Habegger and Lin \cite{Habegger-Lin90}, let $\mathcal{H}(n)$ be the group of link homotopy classes of ordered, oriented string links with $n$ components. There is a split short exact sequence of groups
\begin{equation}\label{eq:H(n)-short-exact}
	1 \longrightarrow \mathcal{C}(n;i) \xrightarrow{\quad\quad} \mathcal{H}(n) \xrightarrow{\quad\delta_i\quad}\mathcal{H}(n;i)\longrightarrow 1
\end{equation}
where $\mathcal{H}(n;i)$ is the copy of $\mathcal{H}(n-1)$ given by the map $\delta_i$ which deletes the $i$th strand. The normal subgroup $\mathcal{C}(n;i)$ is isomorphic to $RF(n-1)$, the reduced free group on the $n-1$ generators shown in Figure~\ref{fig:tau-nj}. Recall that the reduced free group $RF(n-1)$ is the quotient of the free group $F(n-1)$ on the $(n-1)$ generators $\tau_1, \ldots , \tau_{n-1}$ obtained by  adding the relations $[\tau_j, g \tau_j g^{-1}] = 1$ for all  $j \in \{1, \ldots , n-1\}$ and all $g \in F(n-1)$. 
%%%%%%%%%%%%%%%%%%%%%%%%%%%%%%%%%%%%%%%%%%%%%%%%%%%%%%%%
\begin{figure}[htbp]
    \begin{center}
     \includegraphics[width=0.37\textwidth]{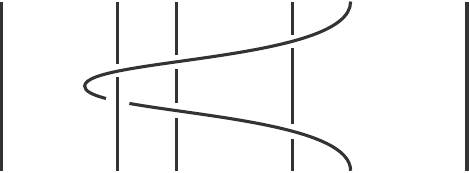}   
		\put(-135.5,-8){\small $1$}
		\put(-103.5,-8){\small $j$}
		\put(-36,-8){\small $i$}
		\put(-4,-8){\small $n$}
		\put(-128,20){\LARGE \ldots}
		\put(-76,20){\LARGE \ldots}
		\put(-31.5,20){\LARGE \ldots}
\end{center}
     \caption{{\small A generator $\tau_{i,j}$ of $\mathcal{C}(n;i)\cong RF(n-1)$.}}\label{fig:tau-nj}
\end{figure}

The exact sequence \eqref{eq:H(n)-short-exact} is split by the map $s_i:\mathcal{H}(n;i)\longrightarrow \mathcal{H}(n)$ which just adds one trivial strand to any element of $\mathcal{H}(n;i)$ as the $i$th strand. Thus, for each $i$, we have the semidirect product decomposition
\begin{equation}\label{eq:reduced-decomp}
	\mathcal{H}(n) = \mathcal{H}(n;i) \ltimes \mathcal{C}(n;i).
\end{equation}
Given $\rho \in \mathcal{H}(n)$, let
\begin{equation}\label{eq:factorization}
\rho=(\theta, h)_i=\theta_i h_i,\qquad  \theta_i\in\mathcal{H}(n;i) \quad \text{and}\quad h_i\in \mathcal{C}(n;i),
\end{equation}
be the factorization of $\rho$ with respect to the above decomposition. A \emph{partial conjugation} of $\rho$ by $\lambda \in \mathcal{H}(n)$ is obtained by $\rho = (\theta,h)_i \mapsto (\theta, \lambda h \lambda^{-1})_i$. 
%%%%%%%%%%%%%%%%%%%%%%%%%%%%%%%%%%%%%%%%%%%%%%%%%%%%%%%%%%%%%%%%%%%%%%%%%%%%
\begin{thm}[Habegger--Lin Markov-type theorem \cite{Habegger-Lin90}]\label{thm:habegger-lin}
	Denote the Markov closure operation by
	\begin{equation}\label{eq:markov-closure}
		\widehat{\cdot}: \mathcal{H}(n) \xrightarrow{\qquad\quad} LM(n).
	\end{equation}
	Then
	\begin{enumerate}
		\renewcommand{\theenumi}{(\alph{enumi})}
		\renewcommand{\labelenumi}{\theenumi}
		\item \label{item:HL-surjective} $\widehat{\cdot}$ is surjective.
		\item \label{item:HL-preimage} $\widehat{\rho}_1 = \widehat{\rho}_2$ if and only if $\rho_1$ and $\rho_2$ in $\mathcal{H}(n)$ are related by a sequence of conjugations and partial conjugations (in fact, partial conjugations are sufficient \cite{Hughes05}).
	\end{enumerate}
\end{thm}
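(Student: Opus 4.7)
The plan is to handle surjectivity in part~(a) via an Alexander-style braiding argument, and then prove the two implications of part~(b) by separate geometric constructions. For~(a), I would show that every $L \in LM(n)$ is isotopic to a link in ``braided'' position with respect to a horizontal disk $D \subset \R^3$ --- meaning $L$ meets $\partial D$ transversely in $2n$ points arranged as $n$ ``top'' and $n$ ``bottom'' endpoints, with each component crossing $\partial D$ exactly twice. Cutting $L$ along these $2n$ points produces an $n$-strand string link $\rho$ whose Markov closure is evidently $L$. The existence of such a braided representative follows by pushing each component past a fixed hyperplane via local isotopies, exactly as in Alexander's theorem for braids.

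For the easy direction of~(b), a full conjugation $\rho_1 = \lambda \rho_2 \lambda^{-1}$ yields isotopic closures by the standard cyclic invariance --- the extra $\lambda$ and $\lambda^{-1}$ factors slide around the closure arcs and cancel. A partial conjugation $(\theta, h)_i \mapsto (\theta, \lambda h \lambda^{-1})_i$ similarly produces link-homotopic (though generally not isotopic) closures: the $\lambda$ factor can be dragged around only the $i$-th closure strand, and the resulting self-intersections of the $i$-th component are permitted by link homotopy. This gives $\widehat{\rho}_1 = \widehat{\rho}_2$ in $LM(n)$ in both cases.

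The main obstacle is the converse: promoting a link homotopy between the closures to a sequence of (partial) conjugations at the string-link level. Given a link homotopy $H : \bigsqcup^n S^1 \times I \to \R^3$, I would put $H$ in general position so that its movie of diagrams has only finitely many non-generic frames, namely Reidemeister moves between distinct components and isolated self-crossings of individual components. By continuously adjusting the braiding disk $D = D_t$ in the parameter $t$ and inserting intermediate re-braidings whenever $H(\cdot,t)$ leaves braided position, each non-generic frame can be translated into an algebraic move on the corresponding element of $\mathcal{H}(n)$: moves involving strands of distinct components become conjugations, while self-crossings of the $i$-th component modify only the $\mathcal{C}(n;i)$-factor in the semidirect product \eqref{eq:reduced-decomp} and can be absorbed into partial conjugations. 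The technical heart --- and where I expect the argument to be most delicate --- is controlling the global re-braiding, ensuring that spurious self-crossings introduced during continuous adjustment of $D_t$ really do collapse into (partial) conjugations; here the reduced-free-group structure $\mathcal{C}(n;i) \cong RF(n-1)$ is essential, since it automatically annihilates repeated self-linking of any single strand and thereby prevents the accumulated moves from producing anything outside the conjugation/partial-conjugation class of $\rho_1$.
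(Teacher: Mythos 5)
This theorem is not proved in the paper at all: it is quoted verbatim from Habegger and Lin \cite{Habegger-Lin90} (with the parenthetical strengthening credited to \cite{Hughes05}), so there is no in-paper argument to compare yours against. Judged on its own terms, your treatment of (a) and of the easy direction of (b) is the standard picture and is essentially fine.

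The hard direction of (b), however, has a genuine gap. The sentence ``each non-generic frame can be translated into an algebraic move \dots\ moves involving strands of distinct components become conjugations, while self-crossings of the $i$-th component \dots\ can be absorbed into partial conjugations'' is not an argument; it is a restatement of the theorem. The difficulty is that a generic link homotopy destroys braided position globally, and the continuous re-braiding you propose introduces an a priori uncontrolled family of modifications of the underlying string link; nothing in your sketch shows that their accumulated effect lies in the equivalence generated by conjugation and partial conjugation. Appealing to $\mathcal{C}(n;i)\cong RF(n-1)$ ``annihilating repeated self-linking'' does not close this: the reduced free group relations kill commutators of the form $[\tau_i, g\tau_i g^{-1}]$, not arbitrary re-braiding artifacts, and a self-crossing of the $i$-th component of the \emph{closed} link need not act on the $\mathcal{C}(n;i)$-factor of \eqref{eq:reduced-decomp} in any controlled way unless it occurs in a prescribed region of the diagram. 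Habegger and Lin's actual proof avoids the movie/re-braiding route entirely: they encode a link homotopy between closures as the action of (a subgroup of) the group of string links on $2n$ strands on $\mathcal{H}(n)$, and then determine the orbits by an inductive, purely algebraic computation exploiting the semidirect product decompositions and the nilpotence of these groups. That stabilizer computation is the heart of the theorem, and your sketch offers no substitute for it.
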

\no We collect some known facts about $\mathcal{H}(n)$ below. Here and throughout the paper, if $G$ is a group then its $k$th lower central subgroup is defined inductively by $G_1 = G, G_2=[G,G_1],\ldots , G_k = [G,G_{k-1}]$.
\begin{enumerate}
	\renewcommand{\theenumi}{\bf(\arabic{enumi})}
	\renewcommand{\labelenumi}{\theenumi}
 	\item \label{item:torsion-free} $\mathcal{H}(n)$ is torsion-free and nilpotent of class $n-1$,
 	\item \label{item:semidirect-LCS} $\mathcal{H}(n)_k=\mathcal{H}(n;i)_k\ltimes \mathcal{C}(n;i)_k$,
 	\item \label{item:H(n)-kth-LCS} $\mathcal{H}(n)_{k-1}/\mathcal{H}(n)_k$ is a free abelian group of rank $(k-2)! {n\choose k}$.
\end{enumerate} 

In the following we will focus on the copy of $\mathcal{C}(n;n) \cong RF(n-1)$ in $\mathcal{H}(n)$ generated by $\langle \tau_1, \ldots , \tau_{n-1}\rangle$ where $\tau_k := \tau_{n,k}$, as given in Figure \ref{fig:tau-nj}. From \cite{Cohen05} we list known and useful facts about $RF(n-1)$ below.
\begin{enumerate}
	\renewcommand{\theenumi}{\bf(\arabic{enumi})}
	\renewcommand{\labelenumi}{\theenumi}
	\setcounter{enumi}{3}
	\item \label{item:RF-presentation} $RF(n-1)$ admits the presentation
	\begin{equation}\label{eq:RF-relations}
		\begin{split}
			& \Bigl\langle \tau_1, \ldots , \tau_{n-1} \ \bigl|\ \text{for all } 1 \leq i_1 < \ldots < i_k \leq n, 1<k\leq n:\\
			& \qquad \qquad \qquad [\tau_{i_1}, \ldots , \tau_{i_k}] = 1, \text{ whenever } \tau_{i_s} = \tau_{i_r} \text{ for some } s < r \Bigr\rangle,
		\end{split}
	\end{equation}
	where $[\tau_{i_1}, \ldots , \tau_{i_k}]$ denotes the simple $k$-fold commutator $[\cdots [[\tau_{i_1}$, $\tau_{i_2}]$, $\tau_{i_3}]$, $\ldots$, $\tau_{i_k}]$ (c.f. \cite[p. 295]{Magnus-Karrass-Solitar76}).
\end{enumerate}

\no Denote by $I = (i_1, \ldots , i_k)$ an \emph{ordered} multiindex, where $1 \leq i_1 < \ldots < i_k \leq n-1$, $1 \leq k \leq n-1$, and let
\begin{equation}\label{eq:tau-I-sigma}
	\tau(I,\sigma) := [\tau_{i_1},\tau_{i_{\sigma(2)}}, \ldots , \tau_{i_{\sigma(k)}}],
\end{equation}
where $\sigma$ is a permutation of $\{2, \ldots , k\}$. Then
\begin{enumerate}
	\renewcommand{\theenumi}{\bf(\arabic{enumi})}
	\renewcommand{\labelenumi}{\theenumi}
	\setcounter{enumi}{4}
	\item $RF(n-1)_n = \{1\}$ and
	\[
		RF(n-1)_k\Bigl/RF(n-1)_{k+1} \cong \bigoplus_{(k-1)! {n-1 \choose k}} \Z.
	\]
	\item \label{item:RF-normal} Each $RF(n-1)_k\Bigl/RF(n-1)_{k+1}$ is generated by $\{\tau(I,\sigma)\}$, with $|I| = k$, and elements $z \in RF(n-1)$ have the normal form
	\begin{equation}\label{eq:normal-form-RF}
		z = \lambda_1 \lambda_2 \cdots \lambda_{n-1}, \quad \text{where}\quad \lambda_k = \prod_{I, |I| = k} \prod_{\sigma \in \Sigma(2, \ldots , k)} \tau(I, \sigma)^{e(I, \sigma)},
	\end{equation}
for some $e(I,\sigma) \in \Z$.
\end{enumerate}

\no Consider the homomorphism
\begin{equation}\label{eq:delta}
	\delta: \mathcal{H}(n) \xrightarrow{\qquad \quad} \prod^n_{i=1} \mathcal{H}(n;i), \qquad \delta = \prod^n_{i=1} \delta_i,
\end{equation}
where $\delta_i$ is as defined in \eqref{eq:H(n)-short-exact}. Clearly,
\begin{equation}\label{eq:ker-delta}
	\ker \delta = \mathcal{C}(n;1) \cap \mathcal{C}(n;2) \cap \ldots \cap \mathcal{C}(n;n).
\end{equation}
Observe that elements of $\ker \delta$ have a natural geometric meaning:  they are precisely the string links which become trivial after removing \emph{any} of their components. Further we call them {\em Brunnian string links} and denote by $\mathcal{BH}(n)$.
In the ensuing lemma we choose to treat $\mathcal{BH}(n)$ as a subgroup of $\mathcal{C}(n;n) \cong RF(\tau_1, \ldots , \tau_{n-1})$.
%%%%%%%%%%%%%%%%%%%%%%%%%%%%%%%%%%%%%%%%%%%%%%%%%%%%%%%%
\begin{lem}\label{lem:BH(n)}
	$\mathcal{BH}(n)$ is a free abelian group of rank $(n-2)!$ generated by
	\begin{equation}\label{eq:tau-sig}
		\tau(n,\sigma) := [\tau_1, \tau_{\sigma(2)}, \ldots , \tau_{\sigma(n-1)}] \quad \text{for} \quad \sigma \in \Sigma(2, \ldots , n-1).
	\end{equation}
	Moreover,
	\begin{enumerate}
		\renewcommand{\theenumi}{(\roman{enumi})}
		\renewcommand{\labelenumi}{\theenumi}
		\item \label{item:BH(n)-C(n;i)} for each $i$:
		\[
		 \mathcal{BH}(n) =\mathcal{C}(n;i)_{n-1}\cong \mathcal{H}(n)_{n-1} \cong RF(n-1)_{n-1};
		\]
		\item \label{item:BH(n)-Z(H(n))} $\mathcal{BH}(n)\subset Z(\mathcal{H}(n))$.
	\end{enumerate}
\end{lem}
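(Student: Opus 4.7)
The strategy is to identify $B\mathcal{H}(n)$ with $\mathcal{H}(n)_{n-1}$, which under the isomorphism $\mathcal{C}(n;i)\cong RF(n-1)$ coincides with $RF(n-1)_{n-1}$; once this is done, the generators, rank, and centrality all drop out of the facts already tabulated. The core step is thus the equality $B\mathcal{H}(n)=\mathcal{C}(n;n)_{n-1}$.

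The easy direction is $\mathcal{H}(n)_{n-1}\subseteq B\mathcal{H}(n)$. Fixing any $i$, fact~\ref{item:H(n)-kth-LCS} gives $\mathcal{H}(n)_{n-1}=\mathcal{H}(n;i)_{n-1}\ltimes\mathcal{C}(n;i)_{n-1}$, and the first factor is trivial because $\mathcal{H}(n;i)\cong\mathcal{H}(n-1)$ is nilpotent of class $n-2$ by fact~\ref{item:torsion-free}. Hence $\mathcal{H}(n)_{n-1}=\mathcal{C}(n;i)_{n-1}$ for every $i$, which forces $\mathcal{H}(n)_{n-1}\subseteq\bigcap_i\mathcal{C}(n;i)=B\mathcal{H}(n)$. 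For the converse I would work inside $\mathcal{C}(n;n)\cong RF(n-1)$ and write any $\rho\in B\mathcal{H}(n)$ in the normal form~\eqref{eq:normal-form-RF} as $\rho=\lambda_1\cdots\lambda_{n-1}$. For $1\le i\le n-1$ the restriction $\delta_i|_{\mathcal{C}(n;n)}$ sends $\tau_i\mapsto 1$ and carries the remaining $\tau_j$ to the corresponding standard generators of $\mathcal{C}(n-1;n-1)\cong RF(n-2)$; consequently it annihilates every bracket $\tau(I,\sigma)$ with $i\in I$ and maps the rest to normal-form commutators in $RF(n-2)$. Since $\delta_i(\rho)=1$, uniqueness of the normal form in $RF(n-2)$ (fact~\ref{item:RF-normal}) forces $e(I,\sigma)=0$ whenever $i\notin I$. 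Letting $i$ range over $\{1,\ldots,n-1\}$ eliminates every term with $|I|<n-1$, leaving $\rho=\lambda_{n-1}\in RF(n-1)_{n-1}$.

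Combining the two inclusions yields $B\mathcal{H}(n)=\mathcal{C}(n;i)_{n-1}=\mathcal{H}(n)_{n-1}$, which is~\ref{item:BH(n)-C(n;i)}. Fact~\ref{item:RF-normal} at $k=n-1$ then identifies this group as free abelian on the $(n-2)!$ brackets $\tau(n,\sigma)$, $\sigma\in\Sigma(2,\ldots,n-1)$, settling the rank assertion. Finally, nilpotency of class $n-1$ gives $[\mathcal{H}(n),B\mathcal{H}(n)]=[\mathcal{H}(n),\mathcal{H}(n)_{n-1}]\subseteq\mathcal{H}(n)_n=\{1\}$, which is~\ref{item:BH(n)-Z(H(n))}. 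The main technical obstacle is the middle step: one needs to verify cleanly that $\delta_i$ sends the normal-form commutators of $RF(n-1)$ indexed by $(I,\sigma)$ with $i\notin I$ to the corresponding normal-form commutators of $RF(n-2)$ (with indices shifted appropriately), so that the uniqueness of normal forms can be invoked.
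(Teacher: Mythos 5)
Your proposal is correct and follows essentially the same route as the paper: the containment $B\mathcal{H}(n)\subseteq\mathcal{C}(n;n)$ plus the normal form \eqref{eq:normal-form-RF} and the action of the deletion maps $\delta_j$ on the brackets $\tau(I,\sigma)$ to kill all $\lambda_k$ with $k<n-1$, the semidirect decomposition \ref{item:semidirect-LCS} with triviality of $\mathcal{H}(n;i)_{n-1}$ to identify $\mathcal{H}(n)_{n-1}=\mathcal{C}(n;i)_{n-1}$, and nilpotency of class $n-1$ for centrality. If anything, your version is marginally more careful than the paper's in invoking uniqueness of the normal form in $RF(n-2)$ to conclude $e(I,\sigma)=0$, and in deriving the reverse inclusion $\mathcal{H}(n)_{n-1}\subseteq\bigcap_i\mathcal{C}(n;i)$ explicitly, but these are refinements of the same argument rather than a different one.
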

\begin{proof}
	The fact that $\mathcal{H}(n)_{n-1} \subset Z(\mathcal{H}(n))$ follows immediately from nilpotency (i.e. length $n$ commutators are all trivial in $\mathcal{H}(n)$). 
Thus \ref{item:BH(n)-C(n;i)} implies \ref{item:BH(n)-Z(H(n))}. 

Clearly $\mathcal{BH}(n)\subset \ker \delta_i=\mathcal{C}(n;i)$, so any $z\in \mathcal{BH}(n)$ can be written in the normal form \eqref{eq:normal-form-RF}. We claim that $z=\lambda_{n-1}$. Indeed, for each $k<n-1$ and any $I = (i_1, \ldots , i_k)$, consider $\tau(I,\sigma)$ given in \eqref{eq:tau-I-sigma}. Pick $j \in \{1,\ldots ,  n-1\}$ such that $j\neq i_r$  for all $i_r\in I$, which is possible since $k<n-1$. The map
 $\delta_j: \mathcal{C}(n;i) \longrightarrow \mathcal{C}(n;i)\cap \mathcal{C}(n;j)$ which deletes the $j$th strand is given on generators as
	\begin{equation}\label{eq:generator-delta}
		\delta_j(\tau_{i}) = \begin{cases} 1 & \quad \text{for } j = i,\\
      \tau_{i} & \quad \text{otherwise}, \end{cases}
	\end{equation} 
so we have $\delta_j(\tau(I,\sigma))=\tau(I,\sigma)$. Therefore, $\delta_j(\lambda_k) \neq 1$ for some $j\in \{1,\ldots,n-1\}$, contradicting the fact that $z \in \mathcal{BH}(n) \subset \ker \delta_j$ for all $j$. Hence, the normal form \eqref{eq:normal-form-RF} implies  $z=\lambda_{n-1}$ and therefore \eqref{eq:tau-sig} follows as well as the first part of \ref{item:BH(n)-C(n;i)}. 

The second identity in \ref{item:BH(n)-C(n;i)} is immediate: \ref{item:semidirect-LCS} implies that $\mathcal{H}(n)_{n-1}=\mathcal{H}(n;i)_{n-1}\ltimes \mathcal{C}(n;i)_{n-1}$, but this is just $\mathcal{C}(n;i)_{n-1}$ since $\mathcal{H}(n;i)_{n-1}\cong \mathcal{H}(n-1)_{n-1}$ is trivial by \ref{item:torsion-free}.
\end{proof}
 The relation between Brunnian string links and Brunnian links is revealed in the following result, which is a consequence of Lemma~\ref{lem:BH(n)} and Theorem~\ref{thm:habegger-lin}.
%%%%%%%%%%%%%%%%%%%%%%%%%%%%%%%%%%%%%%%%%%%%%%%%%%%%%%%%%%%%%%%
\begin{prop}\label{prop:BLM(n)}
	The restriction of the Markov closure operation defined in \eqref{eq:markov-closure} to $\mathcal{BH}(n)$ is injective, and the image $\widehat{\mathcal{BH}(n)}$ equals $BLM(n)$.
\end{prop}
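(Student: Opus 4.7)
The plan is to treat injectivity and the image equality separately, both reducing to combinations of the Habegger--Lin theorem (Theorem \ref{thm:habegger-lin}) with the centrality and sublink-containment facts assembled in Lemma \ref{lem:BH(n)}. Injectivity will follow from the observation that every Markov move fixes $B\mathcal{H}(n)$ pointwise; the image equality is easy in one direction and requires a lifting argument in the other.

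For injectivity, suppose $\rho_1, \rho_2 \in B\mathcal{H}(n)$ have the same closure. Theorem \ref{thm:habegger-lin}\ref{item:HL-preimage} (using the Hughes refinement, so that only partial conjugations are required) produces a finite sequence of partial conjugations $\rho_1 = \eta_0 \to \eta_1 \to \cdots \to \eta_k = \rho_2$. The key claim is that every partial conjugation fixes each element of $B\mathcal{H}(n)$ pointwise: given $\rho \in B\mathcal{H}(n)$, Lemma \ref{lem:BH(n)}\ref{item:BH(n)-C(n;i)} places $\rho$ in $\mathcal{C}(n;i)$, so its factorization \eqref{eq:factorization} reads $\rho = (1,\rho)_i$, and a partial conjugation at strand $i$ by $\lambda \in \mathcal{H}(n)$ sends $\rho$ to $(1, \lambda \rho \lambda^{-1})_i = \lambda \rho \lambda^{-1}$, which equals $\rho$ by the centrality statement in Lemma \ref{lem:BH(n)}\ref{item:BH(n)-Z(H(n))}. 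Induction along the sequence starting from $\eta_0 = \rho_1 \in B\mathcal{H}(n)$ then forces $\eta_j = \rho_1$ for every $j$, hence $\rho_1 = \rho_2$.

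For the image, the inclusion $\widehat{B\mathcal{H}(n)} \subseteq BLM(n)$ is immediate from $B\mathcal{H}(n) = \ker \delta$: for $\rho \in B\mathcal{H}(n)$, the $i$th sublink of $\widehat{\rho}$ equals $\widehat{\delta_i(\rho)} = \widehat{1}$, the trivial link. Conversely, given $L \in BLM(n)$, I would choose $\rho \in \mathcal{H}(n)$ with $\widehat{\rho} = L$ by Theorem \ref{thm:habegger-lin}\ref{item:HL-surjective} and modify it into $B\mathcal{H}(n)$ by Markov moves. For every $i$, $\widehat{\delta_i(\rho)} = \delta_i(L)$ is trivial, so by Theorem \ref{thm:habegger-lin}\ref{item:HL-preimage} applied in $\mathcal{H}(n-1)$ the element $\delta_i(\rho)$ is Markov-equivalent to $1$. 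The main obstacle is to lift these equivalences in each $\mathcal{H}(n-1)$ back to Markov moves in $\mathcal{H}(n)$ and apply them compatibly across all $i$; the plan is an induction on $i$ using the semidirect decomposition \eqref{eq:reduced-decomp} to realize moves in $\mathcal{H}(n-1) \cong \mathcal{H}(n;i)$ as conjugations in $\mathcal{H}(n)$ via the section $s_i$, while controlling the simultaneous effect on the other $\delta_j(\rho)$ by means of the normal form \eqref{eq:normal-form-RF} and the nilpotency class $n-1$ of $\mathcal{H}(n)$. Once all $\delta_i(\rho)$ are trivialized, $\rho$ lies in $\bigcap_i \ker \delta_i = B\mathcal{H}(n)$, as required.
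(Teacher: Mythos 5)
Your injectivity argument and the inclusion $\widehat{B\mathcal{H}(n)} \subseteq BLM(n)$ are correct and essentially identical to the paper's: partial conjugation at strand $i$ of an element with trivial $\theta_i$-factor is an ordinary conjugation, which is trivial by centrality (Lemma~\ref{lem:BH(n)}\ref{item:BH(n)-Z(H(n))}), and the induction along the chain of moves is the right way to keep each $\eta_j$ in $B\mathcal{H}(n)$. The problem is the reverse inclusion $BLM(n) \subseteq \widehat{B\mathcal{H}(n)}$, where your proof stops at a ``plan'': you propose to lift Markov equivalences $\delta_i(\rho) \sim 1$ in $\mathcal{H}(n-1)$ back to moves in $\mathcal{H}(n)$ and apply them compatibly over all $i$, controlling the interaction via the normal form and nilpotency. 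None of this is carried out, and it is genuinely delicate as stated -- a move trivializing $\delta_i(\rho)$ can a priori disturb $\delta_j(\rho)$ for $j \neq i$, and you give no actual mechanism for resolving this. As written, this half of the proposition is not proved.

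The missing idea, which makes the entire lifting apparatus unnecessary, is that the preimage of the trivial link under $\widehat{\cdot}\colon \mathcal{H}(m) \to LM(m)$ is exactly $\{1\}$. This follows from the same observation you already exploit for injectivity: by Theorem~\ref{thm:habegger-lin}\ref{item:HL-preimage}, any $\eta$ with $\widehat{\eta} = \widehat{1}$ is obtained from $1$ by a sequence of conjugations and partial conjugations, and both move types fix the identity element (the partial conjugation of $1 = (1,1)_i$ is $(1, \lambda \cdot 1 \cdot \lambda^{-1})_i = 1$), so $\eta = 1$. Applying this in $\mathcal{H}(n-1)$ to each $\delta_i(\rho)$, the triviality of the sublink $\widehat{\delta_i(\rho)}$ forces $\delta_i(\rho) = 1$ on the nose -- not merely up to Markov equivalence -- so $\rho \in \bigcap_i \ker\delta_i = \ker\delta = B\mathcal{H}(n)$ with no modification of $\rho$ required. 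This is the route the paper takes.
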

\begin{proof}
	In order to see that\, $\widehat{\cdot}$\, is injective on $\mathcal{BH}(n)$, let $\rho \in \mathcal{BH}(n)$. Since $\mathcal{BH}(n) = \mathcal{C}(n;i)_{n-1}$, the factorization 
	\[
		\rho = (\theta,h)_i = \theta_i h_i
	\] 
	is only valid if $\theta_i = 1$. Since this holds for all $i$, partial conjugations of $\rho$ are just ordinary conjugations which, since $\mathcal{BH}(n)\subset Z(\mathcal{H}(n))$, act trivially on $\rho$. Thus, Markov closure is injective on $\mathcal{BH}(n)$.
	
In order to see $\widehat{\mathcal{BH}(n)}=BLM(n)$, observe that for any natural number $m$ the inverse image of the unlink in $LM(m)$ under\, $\widehat{\cdot}:\mathcal{H}(m)\longrightarrow LM(m)$ contains only $1\in \mathcal{H}(m)$.
Indeed, Theorem \ref{thm:habegger-lin}\ref{item:HL-preimage} tells us that any element of the inverse image of the unlink under\, $\widehat{\cdot}$\, has to be related to $1$ by conjugations or partial conjugations. Obviously, both of these operations act trivially on $1$, which proves the claim.

Now fix a representative link $L \in BLM(n)$ and, by Theorem~\ref{thm:habegger-lin}\ref{item:HL-surjective}, let $\rho \in \mathcal{H}(n)$ be such that $\widehat{\rho} = L$. Since any $(n-1)$-component sublink $\widehat{\delta_i(\rho)}$ of $L$ is trivial, the fact proven above implies that $\delta_i(\rho) = 1$ for any $1 \leq i \leq n$, and therefore $\rho \in \ker \delta = \mathcal{BH}(n)$.
\end{proof}
%%%%%%%%%%%%%%%%%%%%%%%%%%%%%%%%%%%%%%%%%%%%%%%%%%%%%%%%%%%%%

\section{Torus homotopy groups}\label{S:torus} 

In this and the following sections all sets of homotopy classes $[X,Y]$ are pointed and $\ast$ denotes a basepoint. Consider the group
\begin{equation}\label{eq:T(n)}
	T(n) := [\Sigma\T^{n-1}, \Cnf(n)] = [\T^{n-1}, \Omega \Cnf(n)]
\end{equation}
of homotopy classes of pointed maps from the $(n-1)$-torus $\T^{n-1} = (S^1)^{n-1}$ to the based loop space $\Omega \Cnf(n)$ of the configuration space $\Cnf(n)$. The product in $T(n)$ comes from the loop multiplication or equivalently the coproduct of suspensions. In the notation of Fox \cite{Fox45, Fox48}, who introduced torus homotopy groups, $T(n)$ is denoted by 
$\tau_n(\Cnf(n))$ and equivalently defined as $\pi_1(\text{Maps}(\T^{n-1},\Cnf(n)),\ast)$, where the basepoint $\ast$ in the case of $T(n)$ is defined to be the constant map. In the following, we will freely alternate between both ways of representing $T(n)$ given in \eqref{eq:T(n)}.

Letting $P_k$ be the $k$-skeleton of $\T^{n-1}$, we have the filtration
\[
	\{s\} = P_0 \subset P_1 \subset \ldots \subset P_{n-1} = \T^{n-1}, \qquad \ast = s = (s_1, \ldots , s_{n-1}).
\]
For each ordered multiindex $I$, observe that  $P_k = \bigcup_{I, |I| = k} S_I$, with notation 
%%%%%%%%%%%%%%%%%%%%%%%%%%%%%%%%%%%%%%%%%%%%%%%%
\begin{equation}\label{eq:S_I-S^I}
\begin{split}
	S_I & = \{t = (t_1, \ldots , t_{n-1}) \in \T^{n-1}\, | \, t_i = s_i \text{ for } i \notin I\}.\\
	S^I & = S_I/(S_I \cap P_{|I|-1}).
\end{split}	
\end{equation}
It implies a decreasing  filtration of groups
\[
	T(n) \supset T(n;1) \supset \cdots \supset T(n;n-1) = \{1\},
\]
where $T(n;k) := [(\T^{n-1}, P_k) ; (\Omega \Cnf(n), \ast)]$.
\no Some known facts about $\{T(n;k)\}$ are summarized below (c.f. \cite[p. 462]{Whitehead78}):
\begin{enumerate}
	\renewcommand{\theenumi}{\bf(\arabic{enumi})}
	\renewcommand{\labelenumi}{\theenumi}
	\setcounter{enumi}{6}
	\item $\{T(n;k)\}$ is a central chain of $T(n)$.
	\item $T(n)$ is nilpotent of class $n-1$.
	\item \label{item:T-LCS-quotient}
	\begin{equation}\label{eq:gamma-quotient}
		T(n;k-1) \, \Bigl/\, T(n;k) \cong \bigoplus_{I,\ |I| = k-1} \pi_I(\Omega\Cnf(n)),
	\end{equation}
	where $\pi_I(\Omega\Cnf(n)) := [S^I, \Omega\Cnf(n)]$.
\end{enumerate}
\no Each factor $\pi_I(\Omega\Cnf(n))$ in the direct sum of \eqref{eq:gamma-quotient} turns out to be a subgroup of $T(n)$ via a monomorphism indexed by $I = (i_1, \ldots , i_k)$, $1\leq i_1 < \ldots<i_k\leq n-1$ defined as follows (c.f. \cite{Fox45, Whitehead78}):
\begin{equation}\label{eq:j_I}
	\begin{split}
		j^\#_I & : [S^I, \Omega\Cnf(n)] \xrightarrow{\qquad\quad} T(n),\\
		& j^\#_I([f]) = [f \circ j_I], \quad j_I: \T^{n-1}\! \longrightarrow S^I,
	\end{split}
\end{equation}
where $j_I$ is the quotient projection.
\begin{rem}
{\em
\no Another way to see \eqref{eq:gamma-quotient} is via the homotopy equivalence 
$\Sigma \T^{n-1}\cong\bigvee^{n-1}_{k=1}\ \bigvee_{I,\,|I|=k} \Sigma S^k$.
Then each monomorphism $j^\#_I$ of \eqref{eq:j_I} can be regarded as induced from the restriction in the above
bouquet to the $I$th factor $\Sigma S^{|I|}$ of $\Sigma \T^{n-1}$. 
}
\end{rem}
We have the  following lemma due to Fox \cite[p. 498]{Fox48}:

%%%%%%%%%%%%%%%%%%%%%%%%%%%%%%%%%%%%%%%%%%%%%
\begin{lem}\label{lem:fox} For every ordered multiindex $I$, $j^\#_I$ is a monomorphism. For $I$, $J$:
	\begin{enumerate}
		\renewcommand{\theenumi}{(\roman{enumi})}
		\renewcommand{\labelenumi}{\theenumi}
		\item \label{item:fox-IJ-empty} Suppose $I \cap J = \text{\rm \O}$ and $\epsilon = (-1)^w$, where $w$ is the number of instances of $i > j$ with $i \in I$ and $j \in J$. Then for any $\alpha \in \pi_I(\Omega\Cnf(n)), \beta \in \pi_J(\Omega\Cnf(n))$, we have
		\[
			j^\#_{I \cup J} ([\alpha, \beta]) = [j^\#_I(\alpha), j^\#_J(\beta)]^\epsilon,
		\]
		where $[\alpha, \beta]$ is the Samelson product of $\alpha$ and $\beta$, and $[\,,\,]$ on the right hand side denotes the commutator in $T(n)$.
		\item \label{item:fox-IJ-nonempty} If $I \cap J \neq \text{\rm \O}$, then 
			$[j^\#_I(\alpha), j^\#_J(\beta)] = 1$.
	\end{enumerate}
\end{lem}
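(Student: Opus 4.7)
The plan is to address the three parts separately, starting with (iii) and then handling (i) and (ii) in parallel via a common tool.

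Part (iii) is the easiest: the standard suspension splitting of the torus,
\[
\Sigma \T^{n-1} \simeq \bigvee_K \Sigma S^{|K|},
\]
indexed by non-empty ordered multi-indices $K \subseteq \{1,\ldots,n-1\}$, provides for each $I$ a canonical collapse map $\rho_I : \Sigma \T^{n-1} \to \Sigma S^{|I|}$ onto the $I$-th summand. Via the suspension–loop adjunction $T(n) \cong [\Sigma \T^{n-1}, \Cnf(n)]$, precomposition with $\rho_I$ furnishes a one-sided inverse to $j^\#_I$, showing it is a split monomorphism.

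The main tool for (i) and (ii) is to rewrite the pointwise commutator of two loop-valued maps as a Samelson product precomposed with the reduced diagonal. Explicitly, for any $f, g : \T^{n-1} \to \Omega\Cnf(n)$, the assignment $t \mapsto f(t)g(t)f(t)^{-1}g(t)^{-1}$ vanishes on the two "axes'' of $\T^{n-1} \times \T^{n-1}$ and hence factors as
\[
[f,g] \;=\; \langle f, g\rangle \circ \bar\Delta, \qquad \T^{n-1} \xrightarrow{\,\bar\Delta\,} \T^{n-1}\wedge \T^{n-1} \xrightarrow{\,\langle f,g\rangle\,} \Omega\Cnf(n),
\]
where $\bar\Delta$ is the reduced diagonal and $\langle f,g\rangle(s \wedge t) = f(s)g(t)f(s)^{-1}g(t)^{-1}$ is the external Samelson product. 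Plugging in $f = \alpha\circ j_I$ and $g = \beta\circ j_J$, we get $\langle f, g\rangle = \langle \alpha,\beta\rangle \circ (j_I \wedge j_J)$, so the commutator $[j^\#_I(\alpha), j^\#_J(\beta)]$ is determined by the auxiliary map
\[
\phi_{I,J} \;:=\; (j_I \wedge j_J) \circ \bar\Delta \;:\; \T^{n-1} \longrightarrow S^I \wedge S^J, \qquad \phi_{I,J}(t) \;=\; t_I \wedge t_J,
\]
where $t_I$ denotes the image of $t$ under the collapse retaining only the coordinates indexed by $I$.

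For part (i), when $I \cap J = \emptyset$ the coordinates appearing in $t_I$ and $t_J$ are disjoint, so $t_I \wedge t_J$ is simply the $(I\cup J)$-projection written in the order $(I,J)$ rather than the natural order of $I \cup J$. Identifying $S^I \wedge S^J \cong S^{I\cup J}$ through the permutation of smash factors that sorts the indices has degree $\epsilon = (-1)^w$, so $\phi_{I,J} \simeq \epsilon \cdot j_{I \cup J}$; composing with $\langle \alpha, \beta\rangle$ yields $[j^\#_I(\alpha), j^\#_J(\beta)] = \epsilon \cdot j^\#_{I\cup J}([\alpha,\beta])$, equivalently the assertion of (i). For part (ii), if $i_0 \in I \cap J$, then the coordinate $t_{i_0}$ appears as a smash factor in both $t_I$ and $t_J$; permuting smash factors to bring the two copies of $S^1_{i_0}$ adjacent, $\phi_{I,J}$ factors through the reduced diagonal $S^1 \to S^1 \wedge S^1 = S^2$ on the $i_0$-th circle. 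Since $\pi_1(S^2) = 0$, this diagonal is null-homotopic, so $\phi_{I,J}$ is null-homotopic and the commutator vanishes.

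The main obstacle is bookkeeping rather than conceptual: for (i) one must correctly compute the sign coming from the permutation of smash factors and reconcile it with the convention for the Samelson product $[\alpha,\beta]$, while for (ii) the factorization through the circle-diagonal into $S^2$ requires care with basepoints and the ordering of smash factors when $I$ and $J$ overlap in more than one index (the same argument is applied iteratively, or one chooses a single common index $i_0$).
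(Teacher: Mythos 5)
Your argument is essentially correct, and it should be said up front that the paper offers no proof of this lemma at all: it is stated as ``a special case of a lemma due to Fox'' and deferred to \cite{Fox45, Whitehead78}. What you have written is the standard argument from those sources: the group commutator in $[\T^{n-1},\Omega\Cnf(n)]$ is the external Samelson product $\langle f,g\rangle$ precomposed with the reduced diagonal, naturality gives $[j^\#_I(\alpha),j^\#_J(\beta)]=\langle\alpha,\beta\rangle\circ(j_I\wedge j_J)\circ\bar\Delta$, and everything reduces to identifying the map $t\mapsto t_I\wedge t_J$. Your treatment of the two cases is right: for $I\cap J=\emptyset$ this map is $j_{I\cup J}$ followed by the shuffle of smash factors, whose degree is $(-1)^w$ with $w$ the number of $(i,j)\in I\times J$ inversions (the blocks $I$ and $J$ being internally sorted); for $i_0\in I\cap J$ the map factors through $\mathrm{id}\wedge\bar\Delta_{S^1}$ with $\bar\Delta_{S^1}:S^1\to S^1\wedge S^1\simeq S^2$ null since $\pi_1(S^2)=0$, and one common index already suffices. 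The only genuine slip is in part (iii): precomposition with the collapse $\rho_I=\Sigma j_I$ \emph{is} $j^\#_I$, not a retraction for it; the one-sided inverse is precomposition with the inclusion of the wedge summand $\Sigma S^{|I|}\hookrightarrow\bigvee_K\Sigma S^{|K|}\simeq\Sigma\T^{n-1}$ (equivalently, the projection of $[\Sigma\T^{n-1},\Cnf(n)]\cong\prod_K[\Sigma S^{|K|},\Cnf(n)]$ onto the $I$-th factor). That is a direction mix-up, trivially repaired, and the remaining issues you flag (reconciling the sign convention with the paper's definition of the Samelson product via $(-1)^{k-1}\partial_*[\,,\,]_W$, and basepoint care in the smash factorization) are indeed just bookkeeping.
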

%%%%%%%%%%%%%%%%%%%%%%%%%%%%%%%%%%%%%%%%%%%%%%%
\no  Consider the following elements of $T(n)$ obtained from the generators $\{B_{k,i}\}$ of $\mathcal{L}(\Cnf(n))$ which are adjoints of the $\{A_{k,i}\}$ defined in \eqref{eq:A-ij}:
%%%%%%%%%%%%%%%%%%%%%%%%%%%%%%%%%%%%%%%%%%%%%%%%%%%%%%
\begin{equation}\label{eq:torus-free-gen}
	t_{k,i}(\ell) := j^\#_{(\ell)}(B_{k,i}) \in \pi_{(\ell)}(\Omega\Cnf(n)), \quad 1 \leq i < k \leq n, \quad 1 \leq \ell \leq n-1,
\end{equation}
where $(\ell)$ is a length $1$ multiindex. In view of the fact that $\{B_{k,i}\}$ are free generators of $\pi_1(\Omega\Cnf(n))$, the $t_{k,i}(\ell)$ each have infinite order and $\{t_{k,i}(\ell)\}$ generates a subgroup of $T(n)$ which we will denote by $TF(n)$. The following lemma is a consequence of Theorem~5.4 in \cite{Magnus-Karrass-Solitar76} and we omit its technical proof.
%%%%%%%%%%%%%%%%%%%%%%%%%%%%%%%%%%%%%%%%%%%%%%%%%%%%%%%%%%%%%%
\begin{lem}\label{lem:TF(n)}
	The lower central series $\{TF(n)_k\}$ of $TF(n)$ has the following properties:
	\begin{enumerate}
		\renewcommand{\theenumi}{\bf(\arabic{enumi})}
		\renewcommand{\labelenumi}{\theenumi}
		\setcounter{enumi}{9}
		\item \label{item:TF-nilpotent} $TF(n)$ is nilpotent of class $n-1$, and 
		\item \label{item:TF(n)-k-subset-T(n)-k} $TF(n)_k \subset T(n;k)$ for every $k$, and the last stage $TF(n)_{n-1}$ is generated by length $n-1$ commutators in $\{t_{k,i}(\ell)\}$.
	\end{enumerate}
\end{lem}
%%%%%%%%%%%%%%%%%%%%%%%%%%%%%%%%%%%%%%%%%%%%%%%%%%%%%%%%%%%%%
\no Next, let us define and characterize the \emph{Brunnian part} of $TF(n)$, which we denote by $BTF(n)$. Consider the following projection map, which is defined by analogy to the string link story as
\[
	\Omega\psi: \Omega\Cnf(n) \xrightarrow{\quad \prod^n_{i=1} \Omega\psi_i\quad } \prod_n \Omega\Cnf(n-1),
\]
obtained by looping $\psi = \psi_1 \times \ldots \times \psi_n$, where $\psi_i: \Cnf(n) \to \Cnf(n-1)$ is the projection $\psi_i: (x_1, \ldots , x_i , \ldots , x_n) \mapsto (x_1, \ldots \widehat{x}_i, \ldots , x_n)$
``deleting'' the $i$th coordinate. As in the case of string links, we say that $t \in TF(n)$ is \emph{Brunnian} whenever $t$ belongs to the kernel of the homomorphism
\[
	\Psi: TF(n) \xrightarrow{\quad} \prod_n [\T^{n-1}, \Omega\Cnf(n-1)],
\]
induced from the product $\prod^{n-1}_{i=1}  \Omega\psi_i$.
The following result should seem like \emph{d\`ej\'a vu} of Lemma \ref{lem:BH(n)} from the previous section. 
%%%%%%%%%%%%%%%%%%%%%%%%%%%%%%%%%%%%%%%%%%%%%%%%%%%%%%%%%
\begin{lem}\label{lem:Lie(n-1)}
	The group $BTF(n) = \ker \Psi$ is a free abelian group of rank $(n-2)!$ generated by iterated commutators in $TF(n)$ of the form
	\begin{equation}\label{eq:t-sig}
		t(n,\sigma) = [t_{n,1}(1), t_{n,\sigma(2)}(\sigma(2)), \ldots , t_{n, \sigma(n-1)}(\sigma(n-1))],
	\end{equation}
	where $\sigma \in \Sigma(2, \ldots , n-1)$ is any permutation of $\{2, \ldots , n-1\}$. In particular, it follows that  
\begin{equation}\label{eq:BTF-pi}
	BTF(n) \subset TF(n)_{n-1} \subset \pi_N(\Omega\Cnf(n)),
\end{equation}
where $N = (1,2,\ldots , n-1)$ is the top ordered index in the notation of \eqref{eq:j_I}.
\end{lem}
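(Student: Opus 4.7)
My approach mirrors closely the proof of Lemma~\ref{lem:BH(n)}: the lower central series filtration of $TF(n)$ from Lemma~\ref{lem:TF(n)} plays the role of the normal form in $\mathcal{C}(n;n)\cong RF(n-1)$, and the Lie algebra endomorphisms $(\psi_m)_*$ of $\mathcal{L}(\Cnf(n))$ induced by the Fadell--Neuwirth projections $\psi_m\colon \Cnf(n)\to\Cnf(n-1)$ take the place of the strand-deletion homomorphisms $\delta_j$ used in the string link setting.

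The first goal is to prove the containment $BTF(n)\subset TF(n)_{n-1}$. Suppose for contradiction that $t\in\ker\Psi$ satisfies $t\in TF(n)_k\setminus TF(n)_{k+1}$ with $k<n-1$. Using the isomorphism~\ref{item:TF-quotient} together with Lemma~\ref{lem:fox}\ref{item:fox-IJ-empty} to move the monomorphism $j^\#_I$ outside Samelson products, the image $\bar t\in TF(n)_k/TF(n)_{k+1}$ can be written as $\sum_{|I|=k} j^\#_I(\beta_I)$ with each $\beta_I$ in the degree-$k$ piece of $\mathcal{L}(\Cnf(n))$ and at least one $\beta_{I_0}\neq 0$. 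Since each $\Omega\psi_m^\#$ acts only on the configuration space factor, it commutes with every $j^\#_I$, and the Brunnian hypothesis $\Psi(t)=1$ descends at the filtration-graded level to the requirement $(\psi_m)_*(\beta_I)=0$ for all multi-indices $I$ and all $m\in\{1,\ldots,n\}$, where on generators $(\psi_m)_*(B_{j,i})=0$ if $m\in\{i,j\}$ and otherwise equals the reindexed generator in $\mathcal{L}(\Cnf(n-1))$.

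The Lie-algebraic heart of the argument is to show that $\bigcap_{m=1}^{n}\ker(\psi_m)_*$ is trivial in degrees $k<n-1$. Using the vector space decomposition~\eqref{eq:L(Conf(n))}, write $\beta_{I_0}=\sum_{j=1}^{n-1}\beta_{I_0}^{(j)}$ with $\beta_{I_0}^{(j)}\in \mathcal{L}_j=\mathcal{L}(B_{j+1,1},\ldots,B_{j+1,j})$. Because the generators of $\mathcal{L}_j$ only use particle labels in $\{1,\ldots,j+1\}$, the map $(\psi_n)_*$ restricts to the identity on each $\mathcal{L}_j$ with $j<n-1$ (no label is erased) and annihilates $\mathcal{L}_{n-1}$ entirely; hence $(\psi_n)_*(\beta_{I_0})=0$ forces $\beta_{I_0}^{(j)}=0$ for $j<n-1$, placing $\beta_{I_0}$ in $\mathcal{L}_{n-1}$. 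For $m<n$, the projection $(\psi_m)_*$ kills only the generator $B_{n,m}$ among $\{B_{n,i}\}$, so $\ker(\psi_m)_*\cap \mathcal{L}_{n-1}$ is the two-sided ideal $(B_{n,m})\subset \mathcal{L}_{n-1}$; the condition $\beta_{I_0}\in\bigcap_{m=1}^{n-1}(B_{n,m})$ therefore forces every commutator in the expansion of $\beta_{I_0}$ to contain each of $B_{n,1},\ldots,B_{n,n-1}$. A length-$k$ commutator involves at most $k$ distinct generators, so $k\geq n-1$ is needed, contradicting $k<n-1$. Thus $BTF(n)\subset TF(n)_{n-1}$, and~\ref{item:TF-quotient} identifies $TF(n)_{n-1}=\pi_N(\Omega\Cnf(n))^{\mathrm{free}}$ with $N=(1,2,\ldots,n-1)$.

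To finish, any element $t\in BTF(n)$ has the form $t=j^\#_N(\beta)$ for a unique $\beta$ in the degree-$(n-1)$ piece of $\mathcal{L}(\Cnf(n))$, by the monomorphism statement of Lemma~\ref{lem:fox}. Rerunning the preceding analysis at $k=n-1$ places $\beta$ in the multilinear part of the free Lie algebra $\mathcal{L}_{n-1}=\mathcal{L}(B_{n,1},\ldots,B_{n,n-1})$, i.e., the subspace of $(n-1)$-fold commutators in which each $B_{n,m}$ appears exactly once. By the classical description of the multilinear part of a free Lie algebra on $p$ letters, this is a free abelian group of rank $(p-1)!=(n-2)!$ with basis the left-normed brackets $[B_{n,1},B_{n,\sigma(2)},\ldots,B_{n,\sigma(n-1)}]$ for $\sigma\in\Sigma(2,\ldots,n-1)$. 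Transporting through $j^\#_N$ and absorbing the signs from reordering torus labels via Lemma~\ref{lem:fox}\ref{item:fox-IJ-empty} recovers exactly the generators $t(n,\sigma)$ of~\eqref{eq:t-sig}, completing the proof. The main technical obstacle is the Lie-algebraic step: one must verify that the vector space decomposition~\eqref{eq:L(Conf(n))} is compatible enough with the $4T$-relations for the action of each $(\psi_m)_*$ to be analyzed summand by summand, and in particular that $\mathcal{L}_{n-1}$ is an honest kernel (not merely a vector-space complement) of $(\psi_n)_*$.
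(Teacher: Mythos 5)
Your argument is correct and keeps the paper's overall skeleton---pass to the associated graded of $TF(n)$ via Lemma~\ref{lem:TF(n)} and Lemma~\ref{lem:fox}, translate the Brunnian condition into the vanishing of $(\psi_m)_*$ on the degree-$k$ classes $\beta_I$, and conclude that only degree-$(n-1)$ multilinear brackets survive---but you execute the two key steps by a genuinely different route. Where the paper identifies $\ker \pi_\ast(\Omega\psi_m)$ topologically, using the section of the Fadell--Neuwirth fibration to split off the homotopy of the fiber $\bigvee_{n-1}\Omega S^2$, and then eliminates short or index-repeating commutators one at a time by exhibiting an $r$ with $\Omega\psi_r^{\#}$ nonvanishing, you compute $\bigcap_m \ker(\psi_m)_*$ purely inside $\mathcal{L}(\Cnf(n))$: the map $(\psi_n)_*$ isolates the top free summand $\mathcal{L}_{n-1}$ of \eqref{eq:L(Conf(n))}, and for $m<n$ the restriction of $(\psi_m)_*$ to $\mathcal{L}_{n-1}$ is the standard ``kill one generator'' surjection of free Lie algebras with kernel the ideal generated by $B_{n,m}$, so intersecting over $m$ forces full multilinearity and hence degree at least $n-1$. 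The compatibility issue you flag at the end does resolve affirmatively: for fixed $m$ the summands $\mathcal{L}_j$ land in pairwise distinct summands of $\mathcal{L}(\Cnf(n-1))$ (isomorphically for $j\leq m-2$, to zero for $j=m-1$, onto the $(j-1)$st summand with kernel the ideal of $B_{j+1,m}$ for $j\geq m$), so the kernel really can be analyzed summand by summand; the only input you assert without derivation is the value of $(\psi_m)_*$ on the generators, which is geometrically immediate from \eqref{eq:A-ij} and is the easy half of what the paper's fibration argument establishes. Finally, where the paper reproves that the multilinear component of $\mathcal{L}(B_{n,1},\ldots,B_{n,n-1})$ has rank $(n-2)!$ with basis the left-normed brackets (spanning via iterated Jacobi identities, independence via Poincar\'e--Birkhoff--Witt), you cite this as the classical structure of the multilinear part of a free Lie algebra; that is legitimate, though---exactly as in the paper's own reduction to rational coefficients---you should note that the integral statement follows because the multilinear part of the free Lie ring over $\Z$ is free abelian on the same basis. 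Your version buys a shorter, more conceptual kernel computation at the cost of outsourcing two standard facts; the paper's version is longer but self-contained.
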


\begin{proof}
 Observe that $\Omega\psi_i^{\#}$ are defined on the generators $\{t_{k,j}(\ell)\}$ of $TF(n)$ as
	\begin{equation}\label{eq:generator-psi}
		\Omega\psi_i^{\#}(t_{k,j}(\ell)) = \begin{cases} 1 & \quad \text{for } k = i \text{ or } j = i \\
		t_{k,j}(\ell) & \quad \text{otherwise}. \end{cases}
	\end{equation}
	Each $\psi_i$ is a fibration, with the fiber having the homotopy type of $\bigvee_{i=1}^{n-1} S^2$, \cite[p. 14]{Fadell-Husseini01, Cohen-Lada-May76} where each $S^2$ factor corresponds to a generator in $\{B_{i,k},B_{r,i}\}_{k,r}$, it follows that 
$\{B_{i,k},B_{r,i}\}$ generate $\ker \pi_\ast(\Omega\psi_i)$, thus \eqref{eq:torus-free-gen} implies  \eqref{eq:generator-psi}.

Next, we show that every $t \in BTF(n)$ is a product of length $n-1$ commutators in the $t_{k,i}(\ell)$ which do not have repeated lower indices $\{k,i\}$. 
Similar to the situation in the proof of Lemma~\ref{lem:BH(n)}, no nontrivial commutator $[t_{k_1,i_1}(\ell_1)$, $\ldots$, $t_{k_j, i_j}(\ell_j)]$ of length $< n-1$ can be in $BTF(n)$. Indeed, any such commutator has to be in the image under $j^\#_I$, with $I = (\ell_1, \ldots , \ell_j)$, $1\leq \ell_1, \ldots , \ell_j\leq n-1$, of the iterated Samelson product $[B_{k_1,i_1}, \ldots, B_{k_j,i_j}]$. Thanks to the relations~\eqref{eq:4T-rel} and Lemma~\ref{lem:fox}, all the $B_{k_s,i_s}$ have to have a common first lower index $k$; thus, up to sign,  $[B_{k_1,i_1}, \ldots, B_{k_j,i_j}]$ equals $[B_{k,i_1}, \ldots, B_{k,i_j}]$. 
Furthermore, if $j < n-1$, then we can find $r$ with $1 \leq r \leq n-1$ such that $r \notin \{i_1, \ldots , i_j\}$. Then it follows from \eqref{eq:generator-psi} that 
\[
	\Omega\psi_r^{\#} \left([t_{k,i_1}(\ell_1), \ldots , t_{k,i_j}(\ell_j)]\right) \neq 1,
\]
so this commutator cannot be in $BTF(n)$.
Likewise, no nontrivial commutator $[t_{k,i_1}(\ell_1), \ldots, t_{k,i_{n-1}}(\ell_{n-1})]$, (where $(\ell_1$, $\ldots$, $\ell_{n-1})$ is a permutation of $(1,\ldots,n-1)$)  with repeated second lower indices (i.e., $i_p = i_q$ for some $p \neq q$) can be in $BTF(n)$. 
Thus, we are left only with the possibility that $t \in BTF(n)$ is the product of length $n-1$ commutators without repeated lower indices. Hence, \eqref{eq:BTF-pi} follows from \ref{item:TF(n)-k-subset-T(n)-k} of Lemma~\ref{lem:TF(n)}. 
It remains to observe that the $t(n, \sigma)$ defined in \eqref{eq:t-sig} generate $BTF(n)$ or, equivalently, that 
$\{B(n, \sigma) = [B_{n,1},B_{n,\sigma(2)}, \ldots , B_{n, \sigma(n-1)}]\}$
generate $\ker \pi_\ast(\prod_i\Omega \psi_i)$.  By abuse of notation we use $BTF(n)$ to denote $\ker \pi_\ast(\prod_i\Omega \psi_i)$ as well. Since we are only concerned with the free part of $\ker \Psi$ it suffices to work rationally and show the following about the $B(n, \sigma)$s:
\begin{enumerate}
	\renewcommand{\theenumi}{(\alph{enumi})}
	\renewcommand{\labelenumi}{\theenumi}
	\item \label{item:Bnsigma-span-BTF} they span $BTF(n) \otimes \Q$,
	\item \label{item:Bnsigma-lin-indep} they are linearly independent in $\mathcal{L}(\Cnf(n)) = \pi_\ast(\Omega\Cnf(n)) \otimes \Q$.
\end{enumerate}
For  \ref{item:Bnsigma-span-BTF}, note that (by an argument analogous to \cite[p. 295]{Magnus-Karrass-Solitar76}) $BTF(n)\, \otimes\, \Q$ is spanned by the simple $(n-1)$-fold products $[B_{n,i_1}, B_{n,i_2},\ldots , B_{n, i_{n-1}}]$ with no repeated $B_{n,i_k}$. Then a simple induction involving the Jacobi identity proves the claim.  Part \ref{item:Bnsigma-lin-indep} can be argued by considering $\mathcal{L}(\Cnf(n))$ as a subalgebra of the universal enveloping algebra $U\mathcal{L}(\Cnf(n))$ and expanding $B(n, \sigma)$ in monomial basis of $U\mathcal{L}(\Cnf(n))$. Again an elementary induction argument leads to the claim.
\end{proof}
%%%%%%%%%%%%%%%%%%%%%%%%%%%%%%%%%%%%%%%%%%%%%%%%%%%%%%%%%%%%%%%%
\section{Proof of Main Theorem}\label{S:proof}
%%%%%%%%%%%%%%%%%%%%%%%%%%%%%%%%%%%%%%%%%%%%%%%%%%%%%%%%%%%%%%%%

Let $(0,\ldots , 0)$ be the basepoint of $\T^n$ and let each factor be parametrized by the unit interval. Distinguish the following subsets of $\T^n$:
%%%%%%%%%%%%%%%%%%%%%%%%%%%%%%%%%%%%%
\begin{equation}\label{eq:A-set}
	A_t := \T^{n-1} \times \{t\}, \qquad S_n := \{(0,\ldots , 0)\} \times S^1,
\end{equation}
and define the maps
%%%%%%%%%%%%%%%%%%%%%%%%%%%%%%%%%%%%%
\begin{equation}\label{eq:p}
	\begin{split}
		& p^{\#} : [\Sigma\T^{n-1}, \Cnf(n)] \longrightarrow [\T^n, \Cnf(n)],\qquad \text{where}\\
		& p: \T^n \longrightarrow \T^n / (A_0 \vee S_n) \cong \Sigma\T^{n-1}.
	\end{split}
\end{equation}
%%%%%%%%%%%%%%%%%%%%%%%%%%%%%%%%%%%%%%%%%%%%%%%%%%%%%%%%%%%%%%%%%
\begin{lem}\label{lem:inj-pi_n}
	Let $N = (1,2,\ldots , n-1)$ be the top ordered multiindex. Consider the composition
	\[
		\pi_N(\Omega\Cnf(n)) \xrightarrow{\ j^\#_N\ } T(n) \xrightarrow{\ p^{\#}\ } [\T^n, \Cnf(n)],
	\]
	where $j^\#_N$ is defined in \eqref{eq:j_I}. Then $p^{\#} \circ j^\#_N$ is injective.
\end{lem}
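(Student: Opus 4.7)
The plan is to reduce the lemma to a statement about the collapse map $q: \T^n \to S^n$ that quotients the $(n-1)$-skeleton, and then to conclude by combining the Puppe sequence with the classical stable splitting of the torus.

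First I would unwind the definition of $p^{\#} \circ j^\#_N$ under the suspension-loop adjunction. The map $j^\#_N$ corresponds to precomposition with $\Sigma j_N : \Sigma \T^{n-1} \to \Sigma S^N = S^n$, so that for $f \in \pi_N(\Omega\Cnf(n))$ with adjoint $\hat f : S^n \to \Cnf(n)$,
\[
p^{\#} \circ j^\#_N(f) \;=\; \hat f \circ \Sigma j_N \circ p.
\]
A direct tracking of which subspaces of $\T^n$ are collapsed (the quotient by $A_0 \vee S_n$ followed by the suspended quotient by the $(n-2)$-skeleton of $\T^{n-1}$) shows that the preimage of the basepoint under $\Sigma j_N \circ p$ is precisely $A_0 \cup (T^{(n-2)} \times S^1) = T^{(n-1)}$, the $(n-1)$-skeleton of $\T^n$ in its product CW structure. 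Hence $\Sigma j_N \circ p = q$, and the problem reduces to showing that
\[
q^{\#} : [S^n, \Cnf(n)] \xrightarrow{\quad} [\T^n, \Cnf(n)]
\]
is injective.

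Next I would apply the Puppe cofiber sequence of $T^{(n-1)} \hookrightarrow \T^n \xrightarrow{q} S^n$, obtaining the exact sequence
\[
[\Sigma T^{(n-1)}, \Cnf(n)] \xrightarrow{\;\delta^{\#}\;} [S^n, \Cnf(n)] \xrightarrow{\;q^{\#}\;} [\T^n, \Cnf(n)],
\]
where $\delta : S^n \to \Sigma T^{(n-1)}$ is the connecting map. Exactness tells us that $q^{\#}$ is injective provided $\delta^{\#} = 0$, and in fact it suffices to show $\delta$ itself is null-homotopic. Since $\T^n = T^{(n-1)} \cup_{\phi} D^n$ with a single top cell attached by a map $\phi : S^{n-1} \to T^{(n-1)}$, the standard identification in the Puppe construction gives $\delta = \Sigma \phi$.

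Finally, the classical stable splitting of the torus,
\[
\Sigma \T^n \;\simeq\; \bigvee_{\emptyset \neq I \subseteq \{1,\ldots,n\}} \Sigma S^{|I|},
\]
obtained by iterating $\Sigma(X \times Y) \simeq \Sigma X \vee \Sigma Y \vee \Sigma(X \wedge Y)$, expresses $\Sigma \T^n$ as a wedge of spheres. The top summand $\Sigma S^n$ is then a wedge factor, so the attaching map of the top $(n{+}1)$-cell of $\Sigma \T^n$, which is exactly $\Sigma \phi$, must be null-homotopic. Therefore $\delta \simeq \ast$, $\delta^{\#} = 0$, and $q^{\#} = p^{\#} \circ j^\#_N$ is injective.

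The principal subtlety I anticipate is the bookkeeping in the first step: verifying that the composition $\Sigma j_N \circ p$ literally equals the collapse $\T^n \to \T^n/T^{(n-1)}$ and that the adjunction identifies the source $\pi_N(\Omega\Cnf(n))$ with $\pi_n(\Cnf(n))$ compatibly with $j^\#_N$. Once this is set up, the identification $\delta = \Sigma\phi$ and the vanishing of $\Sigma \phi$ via the stable splitting are essentially formal.
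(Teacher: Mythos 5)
Your overall strategy is the same as the paper's: reduce $p^{\#}\circ j^\#_N$ to the collapse map $q^{\#}:\pi_n(\Cnf(n))\to[\T^n,\Cnf(n)]$, run the Puppe cofiber sequence of $\T^{(n-1)}\hookrightarrow\T^n\xrightarrow{q}S^n$, identify the connecting map with $\Sigma\varphi$, and kill it using the splitting $\Sigma\T^n\simeq\bigvee_I \Sigma S^{|I|}$ (the paper does this last step via a retraction $s:\Sigma\T^n\to\Sigma\T^{(n-1)}$ splitting the inclusion, which is the same idea). However, there is a genuine gap at the step ``exactness tells us that $q^{\#}$ is injective provided $\delta^{\#}=0$.'' The target $[\T^n,\Cnf(n)]$ is only a pointed set --- $\T^n$ is not a cogroup and $\Cnf(n)$ is not an $H$-space --- so $q^{\#}$ is a map from a group to a pointed set, and exactness of the Puppe sequence at $[S^n,\Cnf(n)]$ only identifies the \emph{preimage of the basepoint} with $\mathrm{im}\,\delta^{\#}$. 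Triviality of that preimage does not by itself give injectivity, since you cannot translate a general fiber of $q^{\#}$ to the fiber over the basepoint by a group operation in the target.

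The missing ingredient is the Barratt--Puppe coaction: the pinch map $\T^n\to\T^n\vee S^n$ (collapsing the boundary of the top cell) induces an action
\[
\circ:\pi_n(\Cnf(n))\times[\T^n,\Cnf(n)]\longrightarrow[\T^n,\Cnf(n)]
\]
satisfying $a\circ q^{\#}(c)=q^{\#}(a+c)$. With this in hand, $q^{\#}(a)=q^{\#}(b)$ gives
$[1]=q^{\#}(a)^{-a}=q^{\#}(b)^{-a}=q^{\#}(b-a)$, and only \emph{now} does exactness plus $\Sigma\varphi^{\#}=0$ yield $b=a$. This is exactly how the paper completes the argument. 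Your identification of $\Sigma j_N\circ p$ with $q$ and your nullity argument for $\Sigma\varphi$ are fine; you need to insert the coaction step to legitimately pass from trivial kernel to injectivity.
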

\no This result follows from Satz 12, Satz 20 in \cite{Puppe58} (see \cite[p. 305]{Koschorke97}).
Next, we turn to the proof of the Main Theorem.

	We will work with $\mathcal{C}(n;n)$, which is a copy of $RF(n-1)$ inside $\mathcal{H}(n)$, and begin by constructing a homomorphism 
	\begin{equation}\label{eq:phi}
		\phi: \mathcal{C}(n;n) \longrightarrow TF(n)
	\end{equation}
	via the canonical homomorphism $F(n-1) \longrightarrow TF(n)$ defined on the generators $\tau_1, \ldots , \tau_{n-1}$ of the free group by
	\[
		\phi: \tau_i  \mapsto t_{n,i}(i).
	\]
	Observe, by Lemma~\ref{lem:fox}\ref{item:fox-IJ-nonempty}, that any commutator in $\{t_{n,i}(i)\}$ with repeats is trivial. Therefore, as a direct consequence of the presentation in \ref{item:RF-presentation}, we can pass to the quotient and obtain a homomorphism ${\phi: \mathcal{C}(n;n) \cong RF(n-1) \longrightarrow TF(n)}$, as required. 
	
\no 	{\bf Fact:}  The following diagram commutes:
%%%%%%%%%%%%%%%%%%%%%%%%%%%%%%%%%%%%%%%%%%%%%%%%%%%%%%%
	\begin{equation}\label{diag:kappa-phi}
		\begin{tikzpicture}[baseline=(current bounding box.center),description/.style={fill=white,inner sep=2pt}] \matrix (m) [matrix of math nodes, row sep=3em, column sep=2.5em, text height=1.5ex, text depth=0.25ex] { \mathcal{C}(n;n) & & TF(n) \\
  			LM(n) &  & \text{$[\T^n,\Cnf(n)]$.} \\ };
			\path[->,font=\scriptsize]
			(m-1-1) edge node[auto] {$ \phi $} (m-1-3) 
			(m-1-3) edge node[auto] {$ p^{\#} $} (m-2-3)
			(m-1-1) edge node[auto] {$ \widehat{\cdot} $} (m-2-1) 
			(m-2-1) edge node[auto] {$ \kappa $} (m-2-3);
		\end{tikzpicture}
	\end{equation}
With this fact in hand consider the composition $\phi \circ \iota$ where  $\iota: \mathcal{BH}(n) \longrightarrow \mathcal{C}(n;n)$ is the inclusion monomorphism (see Lemma~\ref{lem:BH(n)}). The normal form of any $z \in \mathcal{BH}(n)$ is a product of terms $\tau(n, \sigma)$ as defined in \eqref{eq:tau-sig}; thus, by the definition of $\phi$ and \eqref{eq:t-sig}, we immediately obtain
\[
	\phi(\tau(n,\sigma)) = t(n, \sigma).
\]
Hence, Lemma~\ref{lem:Lie(n-1)} implies that $\phi \circ \iota$ is a monomorphism with image equal to $BTF(n)$. Further, the Fact stated above yields the commutative diagram
%%%%%%%%%%%%%%%%%%%%%%%%%%%%%%%%%%%%%%%%%%%%%%%%%%%%%%%%%%%%%%%%%%%%%%%%%%%%%%%%%%
\begin{equation}\label{diag:big}
	\begin{tikzpicture}[baseline=(current bounding box.center),description/.style={fill=white,inner sep=2pt}]
		\matrix (m) [matrix of math nodes, row sep=2em, column sep=2.5em, text height=1.5ex, text depth=0.25ex]
		{  \mathcal{BH}(n) & & BTF(n)\subset\pi_N(\Omega\Cnf(n))\\
    	 & & TF(n)\\
  		BLM(n) &  & \text{$[\T^n,\Cnf(n)]$.} \\ };
		\path[->,font=\scriptsize]
		(m-1-1) edge node[auto] {$\phi\circ\iota$} (m-1-3)
		(m-1-3) edge node[auto] {$j^\#_N$} (m-2-3) 
		(m-2-3) edge node[auto] {$p^{\#}$} (m-3-3)
		(m-1-1) edge node[auto] {$\widehat{\cdot}$} (m-3-1) 
		(m-3-1) edge node[auto] {$\kappa$} (m-3-3);
	\end{tikzpicture}
\end{equation}

\no By Lemma~\ref{lem:inj-pi_n} the composition $p^{\#} \circ j^\#_N \circ \phi \circ \iota$ is injective. Further, by Proposition~\ref{prop:BLM(n)} the Markov closure map $\,\widehat{\cdot}\, $ is a bijection on $\mathcal{BH}(n)$. Therefore, the injectivity of $\kappa$ follows, proving \ref{item:kappa-injective} of the Main Theorem modulo the above Fact. 
We also have the set identity
\[
	\kappa(BLM(n)) = p^{\#} \circ j^\#_N(BTF(n)),
\]
which implies that $\kappa(BLM(n))$ has the structure of a Lie $\Z$-module with basis given by the iterated Samelson products $B(n,\sigma)$ from \eqref{eq:B-sig}.

%%%%%%%%%%%%%%%%%%%%%%%%%%%%%%%%%%%%%%%%%%%%%%%%%%%%%%%%%%%%%%%%%%%%%%%%%%%%%%%%%%%
\no It remains to prove the Fact stated above. Figure~\ref{fig:geometricCD} shows the intuitive idea -- which originated in Section~5 of \cite{Kom-Milnor08} -- behind the formal argument we present below.
\begin{figure}[htbp]
	\centering
		\includegraphics[scale=.7]{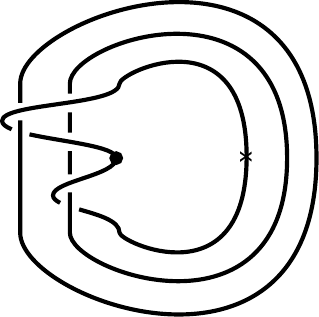} \hspace{.5in}
		\includegraphics[scale=.7]{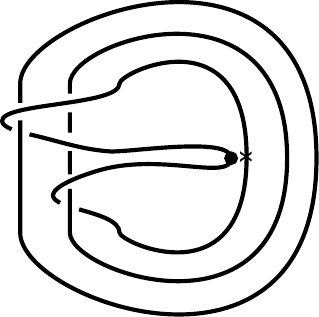}
	\caption{{\small The link $\widehat{\tau_{3,2}\tau_{3,1}}$ before and after an isotopy. The point indicated by $\bullet$ on the third component corresponds to the parameter $t_1$, while the basepoint of the third component is marked by $\times$. The isotopy moving $\bullet$ to the basepoint $\times$ makes it clear that the loop in\protect\footnotemark \  $\pi_1(\text{Maps}(\T^{n-1},\Cnf(n)),\ast)\cong T(n)$ corresponding to $\kappa(\widehat{\tau_{3,2}\tau_{3,1}})$ is homotopic to the product of the loops corresponding to $\kappa(\widehat{\tau_{3,2}})$ and $\kappa(\widehat{\tau_{3,1}})$; in other words, that $\kappa(\widehat{\tau_{3,2}\tau_{3,1}}) = p^\#(t_{3,2}(2)\cdot t_{3,1}(1)) = p^\#(\phi(\tau_{3,2}\tau_{3,1}))$. }
}
	\label{fig:geometricCD}
\end{figure}

%%%%%%%%%%%%%%%%%%%%%%%
\begin{proof}[Proof of the Fact]
		As a first step, we show commutativity of \eqref{diag:kappa-phi} on the generators $\{\tau_i\}$ of $\mathcal{C}(n;n)$. Given a multi-index $I$, we adapt the notation $S^I$ and $S_I$ from \eqref{eq:S_I-S^I}. 
Because all strands of $\widehat{\tau}_i$ except the $n$th and the $i$th can be collapsed to the basepoint (see Figure \ref{fig:tau-nj}), $\kappa(\widehat{\tau}_i)$ factors, up to homotopy, through 
\[
 \T^n\xrightarrow{\quad p_{(i,n)}\quad } S_{(i,n)}= S_i\times S_n\xrightarrow{\quad \kappa(\widehat{\tau}_i)|_{S_i\times S_n}\quad } \Cnf(n) .
\]
Since the $i$th and $n$th strands of $\tau_i$ link once, the restriction $\kappa(\widehat{\tau}_i)|_{S_i\times S_n}$ has degree $1$ after composing with the projection $\Pi_{i,n}:\Cnf(n)\longrightarrow \Cnf(2)\cong S^2$ onto the $i$th and $n$th coordinates. Further, $\kappa(\widehat{\tau}_i)|_{S_i\times S_n}$ factors 
through $S_i\times S_n \xrightarrow{\ p_i\ } \Sigma S_i \xrightarrow{\ A_{n,i}\ } \Cnf(n)$,
where the $A_{n,i}$ are defined in \eqref{eq:A-ij} (note that $p_i$ induces a bijection $p^\#_i:[\Sigma S_i,\Cnf(n)]\longrightarrow [S_i\times S_n,\Cnf(n)]$). Using the definitions in \eqref{eq:j_I}, \eqref{eq:torus-free-gen} and \eqref{eq:p} we have
	\begin{equation}\label{eq:CDstep1}
		\kappa(\widehat{\tau}_i) = p_{(i,n)}^{\#}(p_{i}^{\#}(A_{n,i}))  = p^{\#}(j^\#_{(i)}(B_{n,i})) = p^{\#}(t_{n,i}(i)),
	\end{equation}
\no 	where the second equality is obtained by passing to adjoints. This shows that Diagram~\ref{diag:kappa-phi} commutes on the generators.
\footnotetext{The basepoint $\ast$ of $\pi_1(\text{Maps}(\T^{n-1},\Cnf(n)),\ast)$ can be taken (w.l.o.g.) as the null map obtained from the restriction of the link to its trivial first $n-1$ strands.}
Now, let $\tau \in \mathcal{C}(n;n)$ be a word of length $k$ in $\{\tau_i\}$; specifically
%%%%%%%%%%%%%%%%%%%%%%%%%%%%%%%%%%%%%%%%%%%%%%%%%%%%%%%%%%%%%%
	\begin{equation}\label{eq:tau}
		\tau = \tau_{i_1}\tau_{i_2} \cdots \tau_{i_k}.
	\end{equation}
	Working with a 
  %combed representative\footnote{This terminology seems to go back at least to Artin~\cite{Artin}. A combed diagram looks like an arbitrary product of various generators $\tau_{i,j}$ pictured on Figure \ref{fig:tau-nj}.} or a 
braid representative of $\tau$, we let 
	$0 = t_0 <$ $t_1 <$ $t_2 <$ $\ldots$ $< t_{k-1}$ $< t_k = 1$
	be such that the restriction of the $t$ parameter to $[t_{j-1},t_j]$ parametrizes $\tau_{i_j}$ in \eqref{eq:tau}. Recall from \eqref{eq:A-set} the subsets $A_j := A_{t_j}$ and $S_n$, which are the $(n-1)$-torus with fixed last coordinate $t_j$ and the $n$th coordinate circle, respectively. We have the following cofibration diagram together with the induced exact sequence of sets and groups:
	\begin{diagram}
		\left(A_1 \sqcup A_2 \sqcup \ldots \sqcup A_k\right) \cup S_n & \rTo^{\subset} & \T^n & \rTo^{h} & \bigvee_k \Sigma \T^{n-1} \\
		\left[ \bigsqcup_{i=1}^k A_i ,\Cnf(n)\right] \times \pi_1\Cnf(n)  & \lTo[l>=.3in]^{\subset^\#} & [\T^n, \Cnf(n)] & \lTo[l>=.3in]^{h^\#} & \left[\bigvee_k \Sigma \T^{n-1}, \Cnf(n)\right] 
	\end{diagram}
		
	Since $\kappa(\widehat{\tau})$ restricted to any $A_j$ is null and $\Cnf(n)$ is simply-connected, there exists some ${z \in [\bigvee_k \Sigma\T^{n-1},\Cnf(n)]}$ such that $h^{\#}(z) = \kappa(\widehat{\tau})$. Let $r_j: \bigvee_k \Sigma\T^{n-1} \longrightarrow \Sigma\T^{n-1}$ be the projection onto the $j$th factor and let $z_j = r_j^{\#}(z)$. Clearly $z = z_1 \cdot \ldots \cdot z_k$ where $\cdot $ is the coproduct in $[\Sigma\T^{n-1},\Cnf(n)]$. Since we can choose the representative of $\tau_{i_j}$ to be $1 \cdot \ldots \cdot 1 \cdot \tau_{i_j} \cdot 1 \cdot \ldots \cdot 1$, repeating the above reasoning yields
	\[
		\kappa(\widehat{\tau}_{i_j}) = \kappa([\widehat{1 \cdot \ldots \cdot 1 \cdot \tau_{i_j} \cdot 1 \cdot \ldots \cdot 1}]) = h^{\#}(1 \cdot \ldots \cdot z_j \ldots \cdot 1) = h^{\#}(r_j^{\#}(z_j)) = p^{\#}(z_j),
	\]
	where the last identity follows from the definitions of $h$, $r_j$, and $p$. But then we know from \eqref{eq:CDstep1} that $\kappa(\widehat{\tau}_{i_j}) = p^\#(t_{n,i_j}(i_j))$, so it follows that the adjoint of $z_j$ is $t_{n,i_j}(i_j)$. Since the adjoint of the coproduct is the loop product, we conclude that the adjoint of $z$ equals $\phi(\tau)$, and 
	\[
		\kappa(\widehat{\tau}) = h^{\#}(z) = p^{\#}(\phi(\tau)).\qedhere
	\]
\end{proof}
For \ref{item:kappa-mu} of the Main Theorem, consider any $[L] \in BLM(n)$. Expanding in $\{B(n,\sigma)\}$ we obtain
\[
	\kappa([L]) = \sum_{\sigma \in \Sigma(2,\ldots , n-1)} e(1, \sigma(2), \ldots , \sigma(n-1)) B(n,\sigma),
\]
where $e(1,\sigma(2),\ldots, \sigma(n-1))$ are integer coefficients. We want to show that
%%%%%%%%%%%%%%%%%%%%%%%%%%%%%%%%%%%%%%%%%%%%%%%%%%%%%%%%%%%%%%%%%
\begin{equation}\label{eq:e=mu}
	e(1,\sigma(2),\ldots , \sigma(n-1)) = \mu(1,\sigma(2),\ldots , \sigma(n-1);n),
\end{equation}
where $\mu(1,\sigma(2),\ldots , \sigma(n-1);n)$ are the Milnor invariants of $L$. In the next two paragraphs we briefly recall Milnor's~\cite{Milnor57} definition of these invariants. 

For a fixed $n$-component link $L$ in $S^3$, the quotient $\mathcal{G}_q := \pi_1(S^3 -L)/(\pi_1(S^3-L)_q)$ is an isotopy invariant of $L$ and is generated by $n$ meridians $m_1, \ldots , m_n$, one for each component of $L$~\cite{Chen52,Milnor57}. Letting $w_1, \ldots , w_n$ be the words in $\mathcal{G}_q$ representing untwisted longitudes of the components of $L$, Milnor showed that $\mathcal{G}_q$ has the presentation
\[
	\langle m_1, \ldots , m_n | [m_1,w_1],\ldots , [m_n,w_n],R\rangle,
\]
where $R$ consists of all length $q$ commutators in the generators. If $F(m_1, \ldots , m_n)$ is the free group generated by the $m_i$ and $\Z\langle \langle X_1, \ldots , X_n\rangle \rangle$ is the ring of formal power series in the non-commuting variables $\{X_1, \ldots , X_n\}$, then the homomorphism $M: F(m_1, \ldots , m_n) \to \Z\langle \langle X_1, \ldots , X_n\rangle \rangle$ -- called the \emph{Magnus expansion} -- is defined on generators by
\[
	M(m_i) = 1+X_i, \qquad M(m_i^{-1}) = 1-X_i+X_i^2-X_i^3+\ldots .
\]

The Magnus expansion descends to $\mathcal{G}_q$ and the image of the longitude $w_j$ is
\begin{equation}\label{eq:M-mu}
	M(w_j) = 1+\sum_I \mu(I;j) X_I,	
\end{equation}
where the summation extends over all $I = (i_1, \ldots , i_m)$ where $1 \leq i_r \leq n$ and $X_I = X_{i_1}\cdots X_{i_m}$ for $m>0$. The coefficient $\mu(I;j)$ does not depend on $q$, if $q \geq m$. If $\Delta(I)$ is the greatest common divisor of the $\mu(J;j)$ with $J$ is a proper subset of $I$ (up to cyclic permutation), then Milnor showed that the residue class $\bar{\mu}(I;j)$ of $\mu(I;j)$ modulo $\Delta(I)$ is an invariant of the link $L$. If the indices of $I$ are all distinct, this is a link homotopy invariant; as alluded to above, $\Delta(I) = 0$ when $L$ is an $n$-component homotopy Brunnian link and $I$ is a length $n-1$ multiindex with no repeats, so there is no indeterminacy and $\mu(I;j)$ is a link homotopy  invariant of $L$.

As a first step to proving \eqref{eq:e=mu}, let $z \in \mathcal{BH}(n)$ be such that $\widehat{z} = L$. Thanks to the normal form \eqref{eq:normal-form-RF}, the definition of $\phi$, and Diagram~\ref{diag:big}, we have
%%%%%%%%%%%%%%%%%%%%%%%%%%%%%%%%%%%%%%%%%%%%%%%%%%%%%%%%%%%%%%%%%
\begin{equation}\label{eq:z}
	z = \prod_\sigma \tau(n, \sigma)^{e(1,\sigma(2), \ldots , \sigma(n-1))}.
\end{equation}
Therefore, it suffices to confirm \eqref{eq:e=mu} for a  link diagram obtained by closing up a pure braid representative of $z$ in the above normal form. The identity \eqref{eq:e=mu} will follow from 
%%%%%%%%%%%%%%%%%%%%%%%%%%%%%%%%%%%%%%%%%%%%%%%%%%%%%%%%%%%%%%%%%
\begin{equation}\label{eq:mu-kronecker}
	\begin{split}
		\mu(1, \xi(2), \ldots , \xi(n-1); n)(\widehat{\tau(n,\sigma)}) & = \begin{cases} 1 & \text{ if } \xi = \sigma \in \Sigma_{n-2} \\ 0 & \text{ if } \xi \neq \sigma, \end{cases} \\
		\mu(i_1,i_2, \ldots , i_k; n)(\widehat{\tau(n,\sigma)}) & = 0 \quad \text{for} \quad k < n-1,
	\end{split}
\end{equation}
and the product formula \cite[p. 7]{Mellor00}
%%%%%%%%%%%%%%%%%%%%%%%%%%%%%%%%%%%%%%%%%%%%%%
\begin{equation}\label{eq:mu-prod}
	\begin{split}
		\mu(j_1,\ldots , j_p; n)(\widehat{z_1 \cdot z_2}) & = \mu(j_1, \ldots , j_p; n)(\widehat{z}_1) + \mu(j_1, \ldots , j_p;n)(\widehat{z}_2) \\
		&  \quad + \sum_{k=1}^{n-1} \mu(j_1, \ldots , j_k; n)(\widehat{z}_1) \mu(j_{k+1}, \ldots , j_p; n)(\widehat{z}_2),
	\end{split}
\end{equation}
where $z_1, z_2 \in \mathcal{H}(n)$ and  $\cdot$ denotes the product of string links. Indeed, any proper sublink $K$ of $\widehat{z} = L$ is link-homotopically trivial, meaning that any word $w_i$ representing a longitude in the link group ${\pi_1(S^3 - K)/\pi_1(S^3 - K)_q}$ is trivial. But then, since removing a component doesn't affect the $\mu$ invariants not involving the index of that component, this implies that all $\mu(j_1, \ldots , j_{k};n)(L)$ with $k  < n-1$ vanish. Hence, \eqref{eq:mu-prod} applied to \eqref{eq:z} yields
\[
	\begin{aligned}
		\mu(j_1,\ldots , j_{n-1};n)(\widehat{z}) & =  \sum_{\sigma} e(1, \sigma(2),\ldots , \sigma(n-1))\mu(j_1, \ldots , j_{n-1};n)(\widehat{\tau(n,\sigma)}).
	\end{aligned}
\]
Substituting $(j_1, \ldots , j_{n-1}) = (1, \sigma(2), \ldots , \sigma(n-1))$ and applying \eqref{eq:mu-kronecker}, we obtain \eqref{eq:e=mu} and \ref{item:kappa-mu} of the Main Theorem. One corollary is the fact, previously known to Milnor \cite{Milnor54}, that the $(n-2)!$ higher linking numbers $\{\mu(1, \sigma(2), \ldots , \sigma(n-1);n)\}$ separate $BLM(n)$.

It remains to prove \eqref{eq:mu-kronecker}. A longitude $w_n$ of the $n$th component of $\tau(n,\sigma)$ can be read off directly from the braid diagram of $\widehat{\tau(n,\sigma)}$ as the following word in the meridians $\{m_j\}$ of the other components:
\[
	m_\sigma = [m_1, m_{\sigma(2)}, \ldots , m_{\sigma(n-1)}].
\]
Formally, this word is simply obtained from $\tau(n,\sigma)$ by replacing $\tau_{n,j}$ with $m_j$. It is a general fact (see \cite{Magnus-Karrass-Solitar76}) that the leading term in the Magnus expansion of any commutator $[m_i,m_j]$ is equal to the commutator $[X_i,X_j] = X_iX_j - X_jX_i$ in the ring $\Z\langle\langle X_1, \ldots , X_n \rangle\rangle$ provided $[X_i,X_j] \neq 0$. Therefore, an inductive argument implies that
\[
	M(w_n) = 1 + [X_1,X_{\sigma(2)}, \ldots , X_{\sigma(n-1)}] + (\text{higher-order terms})
\]
(notice that $[X_1, X_{\sigma(2)}, \ldots , X_{\sigma(n-1)}] \neq 0$ since it involves distinct variables). Again, an elementary induction on $n$ shows that in the expansion of $X_\sigma := [X_1$, $X_{\sigma(2)}$, $\ldots$ , $X_{\sigma(n-1)}]$ in monomials of degree $n-1$, the monomial $X_1X_{\sigma(2)}\cdots X_{\sigma(n-1)}$ occurs only once and is a leading term in the expansion.
Therefore, \eqref{eq:mu-kronecker} follows from \eqref{eq:M-mu}. This completes the proof of the Main Theorem.

Corollary \ref{cor:homotopy-periods} is a direct consequence of the fact that the homotopy periods of the Samelson products $\{B(n,\sigma)\}$ in $\pi_{n-1}(\Omega\Cnf(n))$ can be obtained from
the general methodology of Sullivan's minimal model theory \cite{Sullivan77}, or approaches in \cite{Hain84}, \cite{Sinha-Walter08}. Consult \cite{Kom-higher-helicity2} for a basic derivation of such an integral in the case of three component links.

%    Insert the bibliography data here.

\providecommand{\bysame}{\leavevmode\hbox to3em{\hrulefill}\thinspace}
\providecommand{\MR}{\relax\ifhmode\unskip\space\fi MR }
% \MRhref is called by the amsart/book/proc definition of \MR.
\providecommand{\MRhref}[2]{%
  \href{http://www.ams.org/mathscinet-getitem?mr=#1}{#2}
}
\providecommand{\href}[2]{#2}
%%%%%%%%%%%%%%%%%%%%%%%%%%%%%%%%%%%%%%%%%%%%%%%%%%%%%%%%%%%%%%%%%%%%%%%%%%%%%
\bibliography{brunnian}
\bibliographystyle{amsplain}

\end{document}